\documentclass[11pt,reqno,a4paper]{amsart}

\usepackage{graphicx} 
\usepackage[USenglish]{babel}
\usepackage[utf8]{inputenc}
\usepackage{amsfonts,mathrsfs}
\usepackage{amssymb, mathabx}
\usepackage{amsthm}
\usepackage{url}
\usepackage[backgroundcolor=orange!30!white,linecolor=orange!80!white,textsize=footnotesize]{todonotes}
\usepackage{comment}
\usepackage{tikz,tikz-cd, tikz-3dplot}
\usepackage{mathtools}
\usepackage{upgreek}
\usepackage{enumitem}
\usepackage{aliascnt}
\usepackage{dsfont}
\usepackage{epic}
\usepackage{indentfirst}
\usepackage{framed}
\usepackage{subfig}
\usepackage{caption}
\usepackage{subcaption}
\usepackage[a4paper,top=3.3cm,bottom=3cm,left=3cm,right=3cm,bindingoffset=5mm]{geometry}
\usepackage{xcolor}
\usepackage[colorlinks = true,
            linkcolor = blue,
            urlcolor  = blue,
            citecolor = red,
            anchorcolor = blue,
pdftitle={},
pdfauthor={},
linktocpage=true
]{hyperref}
\addto\extrasUSenglish{}
\usetikzlibrary{babel, calc, fadings, intersections}
\usetikzlibrary{decorations.pathreplacing,angles,quotes}

\DeclareMathOperator{\vol}{vol}

\DeclareMathOperator{\Pic}{Pic}
\DeclareMathOperator{\Prin}{Prin}

\DeclareMathOperator{\val}{val}
\DeclareMathOperator{\Sym}{Sym}
\newcommand{\rBN}{\mathrm{BN}}
\newcommand{\ind}{\mathrm{ind}}

\newtheorem{theorem}{Theorem}[section]
\newaliascnt{lemma}{theorem}\aliascntresetthe{lemma}
\newtheorem{thm}[theorem]{Theorem}
\newaliascnt{prop}{theorem}\newtheorem{prop}[prop]{Proposition}\aliascntresetthe{prop}
\newaliascnt{cor}{theorem}\aliascntresetthe{cor}
\newaliascnt{question}{theorem}\aliascntresetthe{question}
\newaliascnt{defin}{theorem}\newtheorem{defin}[defin]{Definition}\aliascntresetthe{defin}

\newaliascnt{conj}{theorem}\aliascntresetthe{conj} 
\theoremstyle{remark}
\newaliascnt{rem}{theorem}\newtheorem{rem}[rem]{Remark}\aliascntresetthe{rem} 
\newaliascnt{example}{theorem}\newtheorem{example}[example]{Example}\aliascntresetthe{example}

\newcommand{\BC}{{\mathbb{C}}}

\newcommand{\BG}{{\mathbb{G}}}
\newcommand{\BN}{{\mathbb{N}}}
\newcommand{\BP}{{\mathbb{P}}}

\newcommand{\BR}{{\mathbb{R}}}

\newcommand{\BZ}{{\mathbb{Z}}}
\newcommand{\BT}{{\mathbb{T}}}
\newcommand{\Z}{{\mathbb{Z}}}
\newcommand{\R}{{\mathbb{R}}}

\newcommand{\mcE}{{\mathcal{E}}}

\newcommand{\mcN}{{\mathcal{N}}}
\newcommand{\mcV}{{\mathcal{V}}}

\newcommand{\mcF}{{\mathcal{F}}}

\DeclareMathOperator{\trop}{trop}

\DeclareMathOperator{\Supp}{Supp}

\DeclareMathOperator{\Div}{Div}
\DeclareMathOperator{\ddiv}{div}

\DeclareMathOperator{\PL}{PL}

\theoremstyle{plain}
\newtheorem{introthm}{Theorem}

\begin{document}
	
\setlength{\parindent}{2ex}
	
\title{Asymptotic behavior of tropical rank functions}

\author[A. Botero]{Ana Maria Botero}
\address{Ana Maria Botero, Universit\"{a}t Bielefeld, Universit\"{a}tsstra{\ss}e 25, D-33615 Bielefeld, Germany}
\email{abotero@math.uni-bielefeld.de}

\author[A. K\"{u}ronya]{Alex K\"{u}ronya}
\address{Alex K\"{u}ronya, Institut f\"{u}r Mathematik, Goethe-Universit\"{a}t Frankfurt, Robert-Mayer-Str. 6-10., D-60325 Frankfurt am Main, Germany}
\email{kuronya@math.uni-frankfurt.de}

\author[E. Vital]{Eduardo Vital}
\address{Eduardo Vital, Universit\"{a}t Bielefeld,  Universit\"{a}tsstra{\ss}e 25, D-33615 Bielefeld, Germany}
\email{evital@math.uni-bielefeld.de}

\date{\today.}

\begin{abstract}
 
    We show that the asymptotic behavior of the two main competing notions of rank of a linear series on a tropical curve is governed by asymptotic invariants, closely paralleling the theory of volumes in algebraic geometry. We introduce and study tropical notions of volume associated to both divisors and tropical modules. We prove optimal asymptotic results for each case. In addition, we show that the tropical volume is compatible with the tropicalization of curves.
\end{abstract}
\maketitle
	
\section{Introduction}\label{Section: Introduction}

Divisor theory on graphs and tropical curves has developed into a powerful combinatorial counterpart of classical algebraic geometry. Since the foundational work of M.~Baker and S.~Norine in \cite{Baker_Norine_2007}, linear series on graphs have provided a discrete analog of the theory of divisors on algebraic curves, leading to tropical versions of the Riemann--Roch theorem and Brill--Noether theory \cite{trop-brill-noether}. Our purpose here is to initiate a study of the asymptotic point of view.

\medskip

A central invariant of a divisor on a curve is the \emph{rank} of the associated linear series. In the tropical setting, several notions of rank have been proposed. The \emph{Baker--Norine rank}, defined originally for graphs and extended to metric graphs, captures the combinatorial structure of chip-firing, and has been fundamental for the development of Brill--Noether theory. Other notions of rank on a metric graph, such as the \emph{independence rank}, introduced by D.~Jensen and S.~Payne in \cite{Jensen_Payne_2022}, arise from considering a notion of tropical linear independence. Although these notions agree (up to $\pm 1$) in many cases, they do not coincide in general, and understanding their relationship remains an important open problem in tropical geometry; see e.\,g.\,\cite[\S\ 9]{trop_linear_series_matroids_25}. 

\medskip

In classical algebraic geometry, an effective way to study linear series is through \emph{asymptotic invariants}. Given a divisor $D$ on a projective variety $X$, one studies the a\-symptotic growth of the spaces of sections $H^0\big((X, \mathcal{O}_X(mD)\big)$, as $m \to \infty$. The resulting invariant---called the \emph{volume} of $D$---has remarkable formal properties: it depends only on the numerical class of $D$, it is homogeneous of degree the dimension of $X$, and it is log-concave on the cone of big divisors. These properties play an important role in modern birational geometry and the study of linear series; see for instance \cite{Lazarsfeld_2004_I} or \cite{Cutkosky_Zariski_dec}, as well as in the theory of asymptotic cohomology developed in \cite{alex-asymp-cohom}. In situations with a strong combinatorial flavour, such as the case of toric varieties, the volume admits an especially precise description in terms of convex geometry \cite{Hering_Kueronya_Payne}.

\medskip

Motivated by these developments, we study asymptotic invariants of linear series on multigraphs and tropical curves, focusing on the growth rate of tropical ranks. More concretely, given a divisor $D$ on a multigraph, we consider the asymptotic behavior of the Baker--Norine rank of the linear series $R(mD)$ associated to $mD$,
as $m \to \infty$. In the case where $D$ is a divisor on a tropical curve, we further consider the asymptotic behavior of the independence rank of $D$. As we shall see, the asymptotic behavior of the Baker--Norine and of the independence rank of $D$ are equal; see \autoref{thm: deg=asymptotic ind_dim trop}. Now, given any tropical submodule $\Sigma \subseteq R(D)$ we further define a multiplication map $\Sigma \times \dotsm \times \Sigma \to R(mD)$ and study the asymptotics of both the Baker--Norine and independence rank as $m \to \infty$. Asymptotic behavior of rank functions leads to natural notions of \emph{tropical volumes}, see Definitions \ref{definition: Volume of a divisor}, \ref{def: Vol_C(D)} and \ref{def: trop+ind_volumes}

\medskip

The main result of this paper is that the asymptotic behavior of a divisor $D$ on a tropical curve $\Gamma$ is independent of the choice of the notion of rank one chooses. In particular, the associated tropical volume is determined by $\Gamma$ and $D$ only, and satisfies an asymptotic Riemann--Roch formula. In contrast, the Baker--Norine and the independence volume of a tropical submodule $\Sigma \subseteq R(D)$ do not agree in general, in particular, the volume of $D$ and the volume of its associated tropical module $R(D)$ do not agree in general; see \autoref{rem: vol_D_and_t_mod}. We provide a sequence of sharp inequalities linking both.  
 
\begin{introthm}[\autoref{thm: deg=asymptotic ind_dim trop}]\label{th:introA}

    Let $\Gamma$ be a tropical curve, equivalently a metric multigraph, and $D \in \Div(\Gamma)$ a divisor. Let $\Sigma \subseteq R(D)$ be a tropical submodule.
 \begin{enumerate}
     \item[(i)] For $D$, we have  
    \[
        \vol_{\ind, \Gamma}^{\BT}(D) = \vol_{\rBN,\Gamma}^{\BT}(D) = \max\{\deg(D),0\}.
    \]
    \item[(ii)] 
    For $\Sigma$, we have  the following sequence of inequalities 
    \begin{equation*}
        \max\{\deg(D),0\}\ge \vol_{\ind, \Gamma}^{\BT}(\Sigma)\ge \vol_{\rBN, \Gamma}^{\BT}(\Sigma) \ge \max\{r_{\rBN}(\Sigma),0\}.
    \end{equation*}
Moreover, all inequalities are sharp. 
    \end{enumerate}

\end{introthm}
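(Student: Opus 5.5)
We need to guess the definitions. Presumably $\vol_{\rBN,\Gamma}^{\BT}(D)=\limsup_m r_{\rBN}(mD)/m$, since $\Gamma$ is a curve and the volume is homogeneous of degree one. The independence volume is defined in the same way with $r_{\ind}$. For a submodule $\Sigma$, let $\Sigma^{m}\subseteq R(mD)$ be the image of the multiplication map $\Sigma\times\dotsm\times\Sigma\to R(mD)$, $(f_1,\dots,f_m)\mapsto f_1\odot\dots\odot f_m$ (tropical sums of functions). Then $\vol(\Sigma)=\limsup_m r(\Sigma^{m})/m$, for either rank.

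\emph{Part (i).} We treat the two signs of $\deg D$ separately.

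If $\deg D<0$, then $R(mD)$ is empty for every $m\ge1$. Both ranks are therefore $-1$, and both volumes vanish.

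If $\deg D\ge 0$, we use Riemann--Roch for metric graphs. It gives $r_{\rBN}(mD)\ge m\deg D-g$, and for $m\deg D>2g-2$ it gives equality, $r_{\rBN}(mD)=m\deg D-g$. The trivial bound $r_{\rBN}(E)\le\deg E$ handles $\deg D=0$, where $r_{\rBN}(mD)\le 0$. Dividing by $m$, we get $\vol_{\rBN}(D)=\deg D$.

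For the independence rank we use two comparisons, which we expect are available earlier in the paper from Jensen--Payne. The first is $r_{\ind}(E)\ge r_{\rBN}(E)$: given $r+1$ generic points, one builds $r+1$ functions each vanishing to the appropriate order at all but one point, and these are tropically independent. The second is $r_{\ind}(E)\le\deg E$, or more generally an upper bound of the form $\deg E+c$. For this we would show that $\deg E+2$ functions in $R(E)$ cannot be tropically independent, using a point-counting or slope argument: the tropical dependence is witnessed by an effective divisor of degree $\deg E$. Sandwiching then gives $\vol_{\ind}(D)=\deg D$.

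\emph{Part (ii).} We prove the three inequalities in order.

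\begin{enumerate}
\item[(a)] The first inequality is monotonicity. Since $\Sigma^{m}\subseteq R(mD)$, and both ranks are monotone under inclusion of submodules, part (i) gives $\vol_{\ind}(\Sigma)\le\vol_{\ind}(D)=\max\{\deg D,0\}$.
\item[(b)] The middle inequality is the pointwise statement $r_{\ind}(\Sigma^{m})\ge r_{\rBN}(\Sigma^{m})$, i.e.\ the Jensen--Payne comparison for submodules, applied to each $m$.
\item[(c)] The last inequality requires $r_{\rBN}(\Sigma^{m})\ge m\,r_{\rBN}(\Sigma)$, which we prove as superadditivity. Set $r=r_{\rBN}(\Sigma)$ and take any effective $E$ of degree $mr$. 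Split $E=E_1+\dots+E_m$ with each $\deg E_i\le r$. For each $i$, pick $f_i\in\Sigma$ with $D+\ddiv f_i-E_i\ge0$. Then $f_1\odot\dots\odot f_m\in\Sigma^{m}$ satisfies $mD+\ddiv(\prod f_i)\ge E$. If $r<0$ the bound is trivial, since ranks are at least $-1$, so the volumes are nonnegative.
\end{enumerate}

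\emph{Sharpness.} We take $\Sigma=R(D)$ and reuse the computation from (i), giving equality in the outer inequalities in suitable cases. For example, with $\Gamma$ a tree and $\Sigma=R(D)$, all quantities equal $\deg D$. For strict separation, we use $\Sigma$ generated by two functions on a circle, or the example of \autoref{rem: vol_D_and_t_mod}, where $\Sigma^{m}$ is a proper submodule whose ranks are computable explicitly.

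\emph{Main obstacle.} The hardest step is the upper bound $r_{\ind}(mD)\le m\deg D+O(1)$, which needs the tropical dependence argument, together with constructing examples where the independence and Baker--Norine volumes of a module genuinely differ. For the latter we would try a module on a cycle where tropical independence exploits genericity that Baker--Norine rank cannot see.
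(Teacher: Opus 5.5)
Your proposal is correct. For part (ii) and for the independence half of part (i) it follows the paper.
\begin{itemize}
\item The two comparisons you expect to find earlier are exactly \autoref{thm: upper bound}, $r_{\ind}(\Sigma)\le\deg(D)+1$, and \autoref{equ: Inequality rk_ind and rk_BN}, $r_{\ind}(\Sigma)\ge r_{\rBN}(\Sigma)+1$. The latter is proved for arbitrary submodules by counting slopes on a small interval.
\item Your splitting $E=E_1+\dots+E_m$ is the paper's superadditivity step.
\item Bounding through $\Sigma^m\subseteq R(mD)$ and monotonicity, instead of applying \autoref{thm: upper bound} to $\ell\Sigma$ directly, is cosmetic. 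The paper takes $\ell\Sigma$ to be the submodule generated by the image of the multiplication map, which is what lets you quote these propositions; none of your steps is affected.
\end{itemize}

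The genuine difference is how you obtain $\vol_{\rBN,\Gamma}^{\BT}(D)=\max\{\deg(D),0\}$. You invoke Riemann--Roch for metric graphs, which gives $r_{\rBN}(mD)\ge m\deg(D)-g$, with equality once $m\deg(D)>2g-2$. The paper instead proves the uniform bound $r(D)\ge\deg(D)-c_C$ of \autoref{lemma: lower_bound_trop_curves} by an explicit procedure. It first moves chips until each open edge carries at most one, then repeatedly fires the vertices carrying at least $\val(v)$ chips until the negative part is absorbed. Your route is much shorter and gives a better constant ($g$ instead of $c_C$). It is legitimate because Riemann--Roch holds on metric graphs with loops; the paper itself applies it to the lollipop. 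The paper's route is elementary and does not depend on Riemann--Roch. It also runs in parallel with the discrete case of \autoref{lemma: lower_bound}, where loops break Riemann--Roch and your shortcut is unavailable.

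There are two smaller points.

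First, your sketch of $r_{\ind}\ge r_{\rBN}$, using functions vanishing at all but one of $r+1$ generic points, fails as written. On the lollipop with $E=2v$ and $x_0,x_1$ in the interior of the segment, both functions can be taken with $E+\ddiv f_i=x_0+x_1$, so they are dependent. The fix is to choose distinct $x_0,\dots,x_r$ with $|E-\sum_i x_i|=\varnothing$; these exist because such $F\in\Sym^{r+1}(\Gamma)$ form a nonempty open set. Now suppose $\theta=\min_i(a_i+f_i)$ were a dependence. Then $\theta\in R(E)$, and at each $x_i$ the minimum is attained by some $f_j$ with $j\neq i$. Since $\operatorname{ord}_x(E+\ddiv\theta)\ge\operatorname{ord}_x(E+\ddiv f_j)$ whenever $f_j$ attains the minimum at $x$, this forces $E+\ddiv\theta\ge\sum_i x_i$, a contradiction. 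Quoting the paper's slope argument is simpler, and it also covers submodules that are not finitely generated.

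Second, your tree example shows that every inequality can be an equality; it contains the paper's one-edge example. The strict separation $\vol_{\ind}>\vol_{\rBN}$, which you only plan, is immediate on the lollipop with $D=v$ and $\Sigma=R(D)$, the example the paper uses:
\begin{itemize}
\item Every element of $R(v)$ is constant on the loop, hence so is every element of $\ell\Sigma$. Therefore $r_{\rBN}(\ell\Sigma)=0$.
\item The slopes $0,1,\dots,\ell$ on the segment give $r_{\ind}(\ell\Sigma)=\ell+1$.
\item Hence $1=\deg(D)=\vol_{\ind,\Gamma}^{\BT}(\Sigma)>\vol_{\rBN,\Gamma}^{\BT}(\Sigma)=0=r_{\rBN}(\Sigma)$.
\end{itemize}
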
 
    
\begin{introthm}[\autoref{thm:asymptotic-rr}]
    For $D$ we obtain an asymptotic Riemann--Roch formula
    \[
        \chi(\ell D)=\deg(D)\ell + o(1),
    \]
    where $\ell \in \mathbb{N}$ and $\chi(\ell D)$ is the Euler characteristic of the divisor $\ell D$.
\end{introthm}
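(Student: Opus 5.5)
The plan is to reduce the statement to the tropical Riemann--Roch theorem of Baker--Norine, as extended to metric graphs by Gathmann--Kerber and Mikhalkin--Zharkov, applied to each multiple $\ell D$ separately. The point is that no genuine limit is needed: I expect $\chi(\ell D)-\deg(D)\ell$ to be identically zero, hence trivially $o(1)$.

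First I fix the normalization of $\chi$. The statement only makes sense with an $o(1)$ error if $\chi$ absorbs the genus term. So I will use the convention, which I take to be the one in the paper's definition,
\[
\chi(E)=r_{\rBN}(E)-r_{\rBN}(K_\Gamma-E)+g-1 ,
\]
where $K_\Gamma$ is the canonical divisor and $g$ the first Betti number of $\Gamma$. If the paper's convention differs from this by an additive constant independent of $\ell$, the same argument goes through verbatim. Applying Riemann--Roch to $E=\ell D$ gives
\[
r_{\rBN}(\ell D)-r_{\rBN}(K_\Gamma-\ell D)=\ell\deg(D)+1-g
\]
for every $\ell\in\mathbb{N}$. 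Hence $\chi(\ell D)=\deg(D)\ell$ exactly, and the error term vanishes identically.

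To tie this to the volume computation in \autoref{thm: deg=asymptotic ind_dim trop}, I will also record what happens in each case.
\begin{itemize}
\item If $\deg D>0$, then for $\ell>(2g-2)/\deg D$ we have $\deg(K_\Gamma-\ell D)<0$, so $r_{\rBN}(K_\Gamma-\ell D)=-1$. Then $r_{\rBN}(\ell D)=\ell\deg D-g$ on the nose, consistent with $\vol^{\BT}_{\rBN,\Gamma}(D)=\deg D$.
\item If $\deg D<0$, then $r_{\rBN}(\ell D)=-1$ for $\ell\ge1$, and $r_{\rBN}(K_\Gamma-\ell D)=-\ell\deg D+g-2$ for large $\ell$.
\item If $\deg D=0$, both ranks lie in $\{-1,0,\dots,g-1\}$. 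Their difference equals $1-g$ by Riemann--Roch, so again no limit argument is needed.
\end{itemize}
In all three cases the identity $\chi(\ell D)=\deg(D)\ell$ holds for every $\ell$, not just asymptotically.

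I do not expect a real obstacle, since this is a corollary of Riemann--Roch. The one delicate point is the normalization of $\chi$. If the paper instead defines $\chi$ without the $g-1$ correction, the error is the constant $1-g$. That constant is $o(1)$ only when $g=1$, and in general it is only $O(1)$. So I will check the definition carefully and, if needed, state explicitly which normalization makes the error term vanish.
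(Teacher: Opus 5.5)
Your Riemann--Roch computation is correct, but you attach it to the wrong $\chi$. The paper defines $\chi_{\iota}(D)=r_{\iota}(D)-r_{\iota}(K_\Gamma-D)$ with no genus term. For the paper's $\chi_{\rBN}$, your own computation therefore gives $\chi_{\rBN}(\ell D)-\deg(D)\ell=1-g$ for every $\ell$. Your claim that a constant shift of convention ``goes through verbatim'' contradicts your correct closing remark that $1-g$ is not $o(1)$.

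Redefining $\chi$ is not the fix. Instead, read the error term as the paper's proof does: it shows $\limsup_{\ell\to\infty}\bigl(\chi_{\iota}(\ell D)-\deg(D)\ell\bigr)/\ell=0$, i.e.\ the error is $o(1)$ after dividing by $\ell$. Read literally with the paper's $\chi$, the formula fails whenever $g\neq 1$. Under the normalized reading, your argument settles the Baker--Norine case at once, and sharply: $\chi_{\rBN}(\ell D)=\deg(D)\ell+1-g$.

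This is a different route from the paper's. The paper never invokes Riemann--Roch. It derives everything from the volume computation in \autoref{thm: deg=asymptotic ind_dim trop}, which uses the uniform lower bound $r_{\rBN}(E)\ge\deg(E)-c_C$ of \autoref{lemma: lower_bound_trop_curves}, the upper bound by degree, and superadditivity. That route also works for multigraphs with loops, where Riemann--Roch fails; yours gives an exact constant but needs metric-graph Riemann--Roch as input.

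The genuine gap is that the theorem concerns both Euler characteristics: $\iota\in\{\ind,\rBN\}$ in \autoref{thm:asymptotic-rr}, and the introduction stresses that the asymptotic formula holds for both. Your proposal never treats $\chi_{\ind}$. For the independence rank you cannot apply Riemann--Roch to each $\ell D$, since by \autoref{exa: R-R does not holt for r_ind} it fails for $r_{\ind}$.

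What is missing is a comparison of $r_{\ind}$ with $r_{\rBN}$ and with the degree:
\begin{itemize}
\item $r_{\ind}(E)\ge r_{\rBN}(E)+1$, by \autoref{equ: Inequality rk_ind and rk_BN};
\item $r_{\ind}(E)\le\deg(E)+1$ when $\deg(E)\ge -1$, by \autoref{thm: upper bound}, which rests on $r_{\ind}(\Sigma)=\dim|\Sigma|+1$ for finitely generated $\Sigma$;
\item $r_{\ind}(E)=0$ when $\deg(E)<0$.
\end{itemize}
Combine these with $r_{\rBN}(E)\ge\deg(E)-g$, which your Riemann--Roch input does provide. This gives $\deg(E)-g+1\le r_{\ind}(E)\le\deg(E)+1$ for $\deg(E)\ge 0$. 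Apply it to $E=\ell D$ and $E=K_\Gamma-\ell D$ in the three cases $\deg(D)>0$, $\deg(D)<0$, $\deg(D)=0$. The result is $\chi_{\ind}(\ell D)=\deg(D)\ell+O(1)$, which is what the theorem needs. Without this step your proof covers only the Baker--Norine half of the statement.
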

One has a priori two Euler charactertistics $\chi_{\ind}$ and $\chi_{\rBN}$ (corresponding to independence and Baker--Norine ranks) that do not agree in general. In fact, \autoref{exa: R-R does not holt for r_ind} shows that Riemann--Roch is not satisfied for the independence rank in general. Even so, the above asymptotic result holds for both. In \autoref{rem:tropi-implies-combi} we comment on the relation between the combinatorial and the metric multigraph case.

\medskip
  
Let us note here that the combinatorial statement in \autoref{lemma: lower_bound} is not immediately implied by the metric multigraph statement in \autoref{lemma: lower_bound_trop_curves}. Namely, given a multigraph $G$, we set the length of each edge $e\in E(G)$ to be $1$, to define $\Gamma$, the metric multigraph associated with $G$. When $G$ is not only a singleton, then both $\Div(\Gamma)$ and $\Prin(\Gamma)$ are strictly larger than $\Div(G)$ and $\Prin(G)$, respectively. In \cite[pg.\ 618]{Baker_2008} it was conjectured that the BN-rank in $G$ and $\Gamma$ would agree. It is easily verified that the conjecture is not true in full generality (take the example of a multigraph $G$ that consists of just one loop). Nevertheless, it was shown in \cite[Thm.\ 1.3]{Hladk_2013} that the conjecture holds for loopless multigraphs.

\medskip
   
If $\Gamma$ is a multigraph, then \autoref{prop: deg=asymptotic dim} provides an asymptotic Riemann--Roch formula. We point out that there exists no Riemann--Roch theorem for multigraphs (i.e.\ allowing loops), in particular, the Riemann--Roch theorem of Baker--Norine is not valid for multigraphs. Indeed, consider the multigraph $G$ which is a loop with one vertex.  There have been various proposals to treat loops for the purpose of Riemann--Roch; e.g.\ by considering weights on the vertices; see \cite{Amini_Caporaso_2013}. \autoref{prop: deg=asymptotic dim} shows that an asymptotic version always holds (even without weights).

\medskip

It follows directly from \autoref{th:introA} that the resulting tropical volume $\vol_{\Gamma}^{\BT}(\cdot )$ shares several fundamental properties of the algebraic setting: it is homogeneous of degree one,  and it is log-concave on the space of divisor classes.   This provides some initial evidence that the tropical volume is the correct asymptotic invariant of linear series in this setting. 
Finally, we show that the tropical volume function is well-behaved with respect to the tropicalization map; see \autoref{prop:specialization} and \autoref{eq:vol-tro}. 

\medskip

The paper is organized as follows. In \autoref{sec: Ranks of linear systems on graphs}, we study the asymptotic behavior of the Baker--Norine rank for divisors on multigraphs, introduce the notion of tropical volume, and prove \autoref{prop: deg=asymptotic dim}, showing that the asymptotic behavior of the $\rBN$-rank depends only on the degree; we also establish an asymptotic Riemann--Roch theorem for multigraphs in \autoref{thm:asymptotic-rr_graph}. 
In \autoref{sec: Ranks of linear systems on tropical curves}, we extend this analysis to tropical curves, where the main result, \autoref{prop: deg=asymptotic dim trop. curve}, shows that tropical volume admits the same simple description as in the combinatorial case. In \autoref{Sec: JP rank}, we introduce the independence rank of $\BT$-submodules (i.e.\ tropical submodules) of $R(D)$ and prove that it has the same asymptotic behavior as the Baker–Norine rank, \autoref{thm: deg=asymptotic ind_dim trop}, leading to a unified notion of tropical volume and an asymptotic Riemann--Roch theorem, \autoref{thm:asymptotic-rr}. Finally, in \autoref{Sec: tropicalization_Lin_Ser}, we relate these tropical results to classical algebraic geometry via tropicalization, showing that tropical volume is compatible with this process; see \autoref{prop:specialization}.

\medskip

\noindent \textbf{Acknowledgements.} Our collaboration started at the workshop ``Geometry of Semirings'' at the Universitat Aut\`{o}noma de Barcelona. We would like to thank the organizers, Joaquim Ro\'{e} and  Marc Masdeu, for the pleasant atmosphere and the excellent working conditions. We also thank Matthew Dupraz for useful comments on a previous version of this article.

A.~M.~Botero and E.~Vital were funded by the Deutsche Forschungsgemeinschaft (DFG, German Research Foundation) – Project-ID 491392403 – TRR 358. A.~K\"{u}ronya acknowledges partial support from  Deut\-sche Forschungsgemeinschaft (DFG) through the Collaborative Research
Centre TRR 326 ``Geometry and Arithmetic of Uniformized Structures" (project number 444845124), and from the ANR-DFG project ``Positivity on K-trivial varieties'' (project number ANR-23-CE40-0026).

\section{Asymptotics of the Baker--Norine rank of linear series on multigraphs}\label{sec: Ranks of linear systems on graphs} 
 
Here, we study the asymptotic behavior of the Baker–Norine rank for divisors on multigraphs i.e.\ we allow loops and multiple edges. We prove an asymptotic Riemann--Roch theorem for multigraphs. We also introduce the notion of tropical volume via the growth of ranks of multiples of a divisor,  establish bounds for the rank,  and prove that the asymptotic behavior depends only on the degree. This culminates in an explicit formula for the tropical volume in the combinatorial setting.

\medskip

Let $S$ be a finite set, and $n\in\BN$. We define $\binom{S}{n}\coloneqq \{ S'\subseteq S \mid \#S'=n\}$.
A \emph{multigraph} $G$ is a triple $(V,E,\varepsilon)$ where $V$ is the set of \emph{vertices}, $E$ is the set of \emph{edges}, and $\varepsilon: E\to \binom{V}{1} \cup \binom{V}{2}$.
In words, a multigraph is a graph that allows loops and multiple edges. We say that a multigraph $G=(V, E, \varepsilon)$ is finite if both $V$ and $E$ are finite. Here, we consider only finite connected multigraphs. 
Given a vertex $v$ of $G$, its \emph{valence} $\val(v)$ is the number of edges adjacent to it. If $e\in E(G)$ is a loop, we consider that $\val\!\big(\varepsilon(e)\big)=2$; we refer the reader to \cite{Diestel_2025} for more details concerning multigraphs. 

\subsection*{Divisors on multigraphs} Let $G$ be a multigraph. We define  $\Div(G)\coloneqq \bigoplus_{v\in V(G)}\Z v$, the group of divisors on $G$, the free abelian group generated by $V(G)$. An element $D\in\Div(G)$ is called a \emph{divisor} on $G$, and we write 
\[
    D=\sum_{v\in V(G)} d_v v, \quad \text{ for } \quad d_v \in \Z\,.
\]
We use the definitions and notation of the theory of divisors on graphs from \cite{Baker_Norine_2007}, with the difference that here we allow loops.
 
If $G$ is a loop at the vertex $v$, then chip-firing on $D$ at $v$ returns the same divisor $D$. Moreover, $D\sim D'$ if and only if there exists a finite sequence of chip-firing that takes $D$ and returns $D'$; see \cite[Lem.\ 4.3]{Baker_Norine_2007}. 

As chip-firing is a commutative operation, for $S \subseteq V(G)$ we fire once from each vertex of $S$ and call it a set-firing on $D$ at $S$. 

\subsection*{Linear series on multigraphs} Let $G$ be a multigraph, and $D$ a divisor on $\Div(G)$. Following \cite[\S\ 1.6]{Baker_Norine_2007} we define

\begin{defin}\label{def: complete linear series for multigraphs}
    The \emph{complete linear series} of a divisor $D\in \Div(G)$, is:
    \[
        |D|\coloneqq \{ D'\in \Div_{+}(G)\; | \;  D'\sim D \}.
    \]
    The  \emph{Baker--Norine rank} (or BN-rank) of $|D|$ is defined as
    \begin{equation*}\label{def: rank D}
        r(D)\coloneqq 
        \begin{cases}
            -1,  \text{ if } |D|=\varnothing, \\
            \max\{d \; | \;  \forall E \in \Div_{+}^{d}(G), \; |D-E|\neq\varnothing \}, \text{ otherwise}.
        \end{cases}
    \end{equation*}
\end{defin}

\begin{rem}\label{rem: rank_negative_degree}
    If $D$ has negative degree, then $r(D)=-1$. 
    In this case, it follows that $-1\leq r(\ell D) \leq 0$ for all $\ell \in \BN$, with $r(\ell D)=0$ only if $\ell=0$. 
    Moreover, if $\deg(D)\ge-1$, then $\deg(D)\ge r(D)$. Namely, suppose $\deg(D)< r(D)$, then for all $E\in\Div_{+}^{r(D)}(G)$, we have $|D-E|\ne\varnothing$. Thus, there exists $E'\in |D-E|$, and as linear equivalence preserves degree, it follows that 
    $
        \deg(E')= \deg(D)-r(D)<0,
    $
a contradiction. 
\end{rem}
\begin{defin}\label{definition: Volume of a divisor}
    The \emph{tropical volume} of a divisor $D\in\Div(G)$ is 
    \[
        \displaystyle \vol_{G}^{\BT}(D)\coloneqq \limsup_{\ell\to\infty}\frac{r(\ell D)}{\ell}.
    \]
    We say that $D$ is \emph{big} if $\vol_G^{\BT}(D)>0$. 
\end{defin}
As the space $R(D) \coloneqq \left\{f \in \mathcal{M}(G) \; | \; D + \operatorname{div}(f) \geq 0 \right\}$ is a $\BT$-module (see \autoref{sec: Ranks of linear systems on tropical curves}), and its rank is defined to be $r(D)$, that is, the BN-rank of the linear series $|D|$, we have the justification for the adjective ``tropical" in the definition of the volume.

\begin{rem}\label{rem:BN-inequalities}
 By \cite[Lem.\ 2.1]{Baker_Norine_2007}, $r(D+D') \ge r(D)+r(D')$, whenever both $r(D)$ and $r(D')$ are nonnegative. 
In particular, if $r(D)\ge 0$, then $r(\ell D) \ge \ell r(D)$ for any $\ell\in\BN$. Therefore, for any $\ell,\ell'\in \BN$ it follows that 
\[
    r(\ell D+\ell' D')\ge r(\ell D) + r(\ell'D')\ge \ell r(D) + \ell' r(D').
\]
Thus, $\deg(\ell D) \ge r(\ell D) \ge \ell r(D)$.
Dividing by $\ell$ and taking upper limits with respect to  $\ell \to \infty$, we obtain
\begin{equation}\label{eq: r_asymptotic}
    \deg(D) \ge \vol_G^{\BT}(D) \ge r( D). 
\end{equation}   
\end{rem}

Next, fix a divisor $D\in\Div(G)$, and consider the map that sends $\ell \in \BZ_{>0}$ to ${r(\ell D)}/{\ell}$.
If $r(D)\ge0$, then Inequalities \eqref{eq: r_asymptotic} allow us to represent the asymptotic behavior of ${r(\ell D)}/{\ell}$ as shown in \autoref{fig:asymptotic_a_D}.
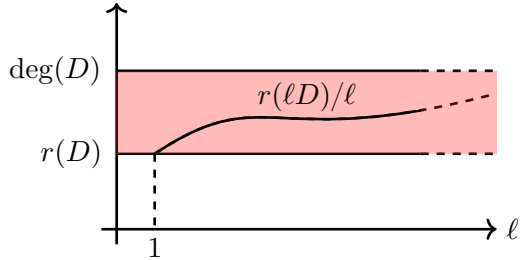
\begin{figure}[ht]
    \centering
    \begin{tikzpicture}[line width = 1pt]   
        \draw[->] (0,-.2) -- (0,3) node[yshift=0pt, xshift=-10pt] {};
        \draw[->] (-.2,0) -- (5,0) node[xshift=6pt] {$\ell$};
        \draw[name path=deg] (4,2.1) -- (0,2.1) node[xshift=-23pt] {$\deg(D)$}; 
        \draw[dashed, name path=deg_d] (4,2.1) -- (5,2.1) node[xshift=-23pt] {};
        \draw[line width = .9pt,name path=rank] (4,1) -- (0,1) node[xshift=-17pt] {$r(D)$}; 
        \draw[dashed, name path=rank_d] (4,1) -- (5,1) node[xshift=-17pt] {};
        \draw[dashed, name path=1] (.5,1) -- (.5,0) node[yshift=-7pt] {1};
        \path[dashed, draw, name path=a_d] (.5,1) .. controls (2,2) and (2,1) .. (5,1.8);
        \fill[color=red!90,opacity=.30] 
            (0,1) -- (5,1) -- (5,2.1) -- (0,2.1);

        \path[name path = 4] (4,1) -- (4,2.3);
        \path[name intersections={of=4 and a_d,by={a}}];
        \begin{scope}
            \clip (.5,1) rectangle (a);
            \path[-, draw, name path=a_d_d] (.5,1) .. controls (2,2) and (2,1) .. (5,1.8);
        \end{scope}
        \node[] at (2.5,1.75) {${r(\ell D)}/{\ell}$};
    \end{tikzpicture}
    \caption{Asymptotic behavior of ${r(\ell D)}/{\ell}$.}
    \label{fig:asymptotic_a_D}
\end{figure}

A special case of Inequalities \eqref{eq: r_asymptotic} is when $\deg(D)=r(D)$, which implies $\vol_G^{\BT}(D)=\deg(D)$. We illustrate this case in the following Example.
\begin{example}\label{exa: deg(D)=vol(D)}

Consider the multigraph $G$ in \autoref{fig: graph_Gamma_tree}.
    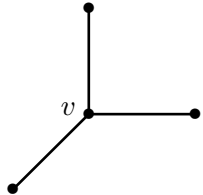
\begin{figure}[ht]
        \centering
        \begin{tikzpicture}[line width = 1pt]
            \node[] (v)      at   (-.27,.07)      {$v$} ;
            \node[inner sep=-5pt] (v0)     at   (0,0)        {$\bullet$} ;
            \node[inner sep=-5pt] (v1)     at   (1.4,0)      {$\bullet$} ;
            \node[inner sep=-5pt] (v2)     at   (0,1.4)      {$\bullet$} ;
            \node[inner sep=-5pt] (v3)     at   (-1,-1)      {$\bullet$} ;

            \draw[-, inner sep=-5pt] (v0) to (v1);
            \path[-, draw] (v0) -- (v2);
            \path[-, draw] (v0) -- (v3);
        \end{tikzpicture}
        \caption{A multigraph $G$, where $r(D)=\deg(D)$.}
        \label{fig: graph_Gamma_tree}
    \end{figure}
    We choose and fix a vertex, say $v$, in $G$ to identify $\Pic(G)=\Z v$.
    Thus, for any divisor $D$ with $\deg(D)\ge-1$, it follows that $r(D)=\deg(D)$. Indeed, given $D\in\Div^d(G)$, it follows that $D\sim d v$. By Inequalities \eqref{eq: r_asymptotic}, it follows that $\vol_G^{\BT}(D) = \deg(D)$.
\end{example}

In the equality $\deg(D)=\vol_G^{\BT}(D)$ in \autoref{exa: deg(D)=vol(D)}, the graph $G$ is special, it is a tree. In \autoref{prop: deg=asymptotic dim} we prove that the equality $\deg(D)=\vol_G^{\BT}(D)$ holds in general.
First, we have the following Proposition:

\begin{prop}\label{lemma: lower_bound}
    There exists $C_{G}\in\BN$ (only depending on $G$) such that $r(D) \ge \deg(D) - C_G$ for all $D\in\Div(G)$. 
\end{prop}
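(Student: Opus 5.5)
The plan is to reduce to a finite check using the finiteness of the Picard group of degree-zero classes, together with the superadditivity of the BN-rank recalled in \autoref{rem:BN-inequalities}. Fix a vertex $v_0\in V(G)$. Loops change neither chip-firing nor linear equivalence, so we may regard $\Prin(G)$ as the image of the Laplacian of the underlying loopless multigraph. Since $G$ is connected, $\Pic^0(G)=\Div^0(G)/\Prin(G)$ is finite; its order is the number of spanning trees, by the matrix-tree theorem.

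First we produce a divisor of every class in a fixed degree with nonnegative rank. Choose representatives $F_1,\dots,F_N$ of $\Pic^0(G)$. For each $i$, adding a large multiple of $v_0$ makes $F_i+a_iv_0$ effective. Set $a\coloneqq\max_i a_i$, so that every divisor of degree $a$ is equivalent to $F_i+av_0\ge 0$ for some $i$. Hence every divisor of degree at least $a$ has $r\ge 0$.

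Second, we need a uniform rate of growth. We will show that $r(D)\ge \deg(D)-C_G$ by proving that adding a chip raises the rank by one once the degree is large. The cleanest way to organize this is by direct verification of the definition. Let $D$ have degree $d$, and let $E\in\Div_+^{k}(G)$ with $k=d-a$. Then $D-E$ has degree $a$, so by the first step it is equivalent to an effective divisor. Therefore $|D-E|\neq\varnothing$ for every effective $E$ of degree $d-a$, and $r(D)\ge d-a$ whenever $d\ge a$.

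This gives $C_G=a$ for divisors with $\deg(D)\ge a$. When $\deg(D)<a$, the bound is automatic provided we take $C_G\ge a+1$: then $\deg(D)-C_G\le -2<-1\le r(D)$. So $C_G\coloneqq a+1$ works for all $D$.

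The only step needing care is the first. There we need the finiteness of $\Pic^0(G)$ when loops are present, together with the observation that $a$ depends only on $G$ and the fixed choice of representatives. Finiteness holds because loops contribute nothing to the Laplacian, so $\Pic^0(G)$ equals that of the loopless reduction, which is finite by Baker--Norine. Once finiteness is granted, the argument is elementary. The superadditivity from \autoref{rem:BN-inequalities} is not even needed, since the definition of rank is checked directly.
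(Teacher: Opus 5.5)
\paragraph{The flaw in your first step.} The sentence ``adding a large multiple of $v_0$ makes $F_i+a_iv_0$ effective'' is false for arbitrary representatives. Adding multiples of $v_0$ changes only the coefficient at $v_0$. For example, $F_1=v_1-v_2$ with $v_1,v_2\neq v_0$ never becomes effective this way. So the conclusion ``every divisor of degree $a$ is equivalent to $F_i+av_0\ge 0$'' is not justified as written.

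\paragraph{How to repair it.} You need representatives that are nonnegative away from $v_0$. Such representatives exist: take $v_0$-reduced divisors, or use finiteness once more. If $m_v$ is the order of $[v-v_0]$ in $\Pic^0(G)$, then $m_v v\sim m_v v_0$, so a negative coefficient at $v\neq v_0$ can be pushed onto $v_0$ modulo $\Prin(G)$. The simplest repair is to add a large multiple of $\sum_{v\in V(G)}v$ instead of $v_0$. Choose $b$ with $F_i+b\sum_{v}v\ge 0$ for all $i$. Then every divisor of degree $b\,\#V(G)$ is equivalent to some $F_i+b\sum_v v\ge 0$. Your second step and the choice of $C_G$ then go through verbatim with $a=b\,\#V(G)$.

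\paragraph{Comparison with the paper.} With this repair the proof is correct. Its skeleton is exactly the paper's. The paper also proves that every divisor of a fixed degree $C_G=2(\#E(G)+1)(\#V(G)+1)$ is equivalent to an effective divisor. It then applies this to $D-D'$ for an arbitrary effective $D'$ of degree $\deg(D)-C_G$.

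The difference is how that key claim is proved. The paper uses an explicit chip-firing procedure: it repeatedly fires all vertices carrying at least $\val(v)$ chips until the negative part vanishes. This yields a constant depending only on $\#V(G)$ and $\#E(G)$. More importantly, the same argument is adapted to metric graphs in \autoref{lemma: lower_bound_trop_curves}. There $\Pic^0$ is a real torus, so no finite list of representatives exists.

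Your route is shorter and more conceptual and uses only finiteness of the Jacobian. However, its constant is non-explicit, since it depends on the chosen representatives. It also does not carry over verbatim to the metric setting.

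\paragraph{An even shorter route.} Your observation that loops do not affect $\Prin(G)$ gives this directly. The rank on $G$ coincides with the rank on its loopless reduction $G'$: same divisors, same equivalence, same effective divisors. Baker--Norine Riemann--Roch on $G'$ then yields $r(D)\ge\deg(D)-g(G')$, so one may take $C_G=g(G')$.
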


Before giving a proof, we set up some notation. For any $D=\sum_{v\in V(G)}d_vv\in\Div(G)$, we define $\mathcal{F}(D) \coloneqq \{ v\in V(G)\mid d_v\ge \val(v) \}$  and $\mathcal{N}(D) \coloneqq \{ v\in V(G)\mid d_v < \val(v) \}$. Note that $\mcF(D)$ and $\mcN(D)$ form a partition of $V(G)$.
Given $D$, there exist unique subdivisors $D^+$ and $D^-$ of $D$, with $D^+$ and $-D^-$ in $\Div_+(G)$ such that $D=D^+ + D^-$. 

We now prove \autoref{lemma: lower_bound}.

\begin{proof}
    Let $C_G\coloneqq  2\big(\#E(G)+1\big)\big(\#V(G)+1\big)$, and note that if $\deg(D)< C_G$, then $r(D)\ge-1\ge \deg(D)-C_G$. 

    Assume $\deg(D)\ge C_G$ and let $D'\in\Div_{+}(G)$ with $\deg(D')=\deg(D)-C_G$. We prove that $|D-D'|\neq\varnothing$. Write $E\coloneqq D-D'=\sum_{v\in V(G)}e_vv$, and observe that $\deg(E)=C_G$.
    Since $\deg(E)=C_G> 2 \#E(G)\#V(G)$,
    it follows that $\mcF(E)$ is not empty. Using  chip-firing, fire from all $v\in \mcF(E)$ to obtain a new divisor $E_1\sim E$.     
    Starting with $E_1$, fire from all $v\in \mcF(E_1)$ to obtain $E_2\sim E_1$. Repeating this, we have a sequence of linear equivalent divisors 
    \[
        E\coloneqq E_0 \sim E_1\sim E_2\sim\cdots.
    \]
    For each $\ell\in\BN$, write 
    \begin{equation}\label{eq:coeff}
        E_\ell=\sum_{v\in V(G)}e_{v,\ell}v\ ,
    \end{equation}
    and note that:
    \begin{enumerate}[label = (F\arabic*)]
        \item \label{item: FP1} $0\le e_{v,\ell+1} \le e_{v,\ell}$ for all $v\in \mcF(E_\ell)$;
        \item \label{item: FP2} $e_{v,\ell+1} \ge e_{v,\ell}$ for all $v\in \mcN(E_\ell)$ and $e_{v,\ell+1} > e_{v,\ell} $ for some $v\in \mcN(E_\ell)$.
    \end{enumerate}

    For each $\ell\in\BN$, define $\mcN_\ell\coloneqq \bigcup_{i=0}^{\ell}\mcN(E_{i})$. As $\mcN_\ell\subseteq \mcN_{\ell+1}$ for each $\ell\in \BN$, and $V(G)$ is finite, 
    there exist $N_0\in\BN$ such that $\mcN_\ell=\mcN_{N_0}$ for all $\ell\ge N_0$.
    If $v\in \mcN_{N_0}$, then $e_{v,\ell} < 2\val(v)$ for all $\ell\ge N_0$. 
    Indeed, if $v\in \mcN_{N_0}$, then $v\in \mcN(E_\ell)$ for some $\ell\le N_0$, and  as $e_{v,\ell} < \val(v)$, it follows that $e_{v,\ell+1}\le e_{v,\ell} + \val(v)<2\val(v)$. Furthermore, if for some $\ell'\in\BN$ we have $\val(v)\le e_{v,\ell+\ell'} < 2\val(v)$, then by \ref{item: FP1} it follows that $e_{v,\ell+\ell'+1}\le e_{v,\ell+\ell'}<2\val(v)$.

    The set $\mcN_{N_0}$ is a proper subset of $V(G)$, indeed, if $\mcN_{N_0}=V(G)$, then 
    \[
        \deg(E_{N_0}) = \sum_{v\in \mcN_{N_0}} e_{v,N_{0}} =\sum_{v\in V(G)} e_{v,N_{0}} < \sum_{v\in V(G)} 2\val(v) \le C_G=\deg(E_{N_0})\ .
    \]
    Let $p\in V(G)\smallsetminus\mcN_{N_0}$, and note that $e_{p,\ell}\ge \val(p)$ for all $\ell\in\BN$. If $\deg(E_{N_{0}}^-)=0$, then we are done. If not, then there exists $q\in \mcN_{N_0}$ such that $e_{q,N_0}<0$. As $G$ is connected, there exists a path connecting $p$ to $q$; see \autoref{fig: path_p_q}.
    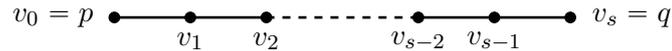
\begin{figure}[ht]
        \centering
        \begin{tikzpicture}
            \draw[line width = 1pt, -] (0,0) -- (2,0) node[] {};
            \draw[line width = 1pt, -, dashed] (2,0) -- (4,0) node[] {};
            \draw[line width = 1pt, -] (4,0) -- (6,0) node[] {};
            \draw[fill] (0,0) circle [radius=2pt] node[xshift=-23pt] {$v_0=p$};
            \draw[fill] (1,0) circle [radius=2pt] node[yshift=-9pt] {$v_1$};
            \draw[fill] (2,0) circle [radius=2pt] node[yshift=-9pt] {$v_2$};
            \draw[fill] (4,0) circle [radius=2pt] node[yshift=-9pt] {$v_{s-2}$};
            \draw[fill] (5,0) circle [radius=2pt] node[yshift=-9pt] {$v_{s-1}$};
            \draw[fill] (6,0) circle [radius=2pt] node[xshift=23pt] {$v_s=q$};
        \end{tikzpicture}
        \caption{A path in $G$, from $p$ to $q$.}
        \label{fig: path_p_q}
    \end{figure}
    
    If $e_{v_1,N_0}<\val(v_1)$, then there exists $\ell_1\le \val(v_1) - e_{v_1,N_0}$ such that after $\ell_1$ firings from $\mcF(E_{N_0})$---which includes firing from $p\in \mcN_{N_0}$---we obtain $e_{v_1,N_0+\ell_1}\ge \val(v_1)$. This means that: 1) $e_{v_1,\ell}\ge 0$ for all $\ell\ge N_0+\ell_1$, and; 2) to increase $e_{v_2,N_0}$ we need at most $\val(v_1) - e_{v_1,N_0}+1$ firings. Thus, after finitely many increases of $e_{v_2,N_0}$, say $\ell'_2$, we have $e_{v_2,N_0+\ell'_2}\ge \val(v_2)$, then there exists $\ell_2\le\ell_2'$ such that $e_{v_3,N_0+\ell_2+1}>e_{v_3,N_0+\ell_2}$. Now, it is not difficult to see that after finitely many firings, say $\ell$ firings 
    \begin{equation}\label{eq: e_l increases}
        e_{q,N_0+\ell}> e_{q,N_0}.
    \end{equation}
    
    If $e_{v,\ell}<0$ for some $\ell$ and $v$, then $e_{v,\ell+1}\ge e_{v,\ell}$ by \ref{item: FP2}. This and the Inequality \eqref{eq: e_l increases} imply that there exists $\ell\in\BN$ such that $\deg(E_{N_0+{\ell}}^-)>\deg(E_{N_0}^-)$. Since $-\infty<\deg(E^-_{N_0})$, after finitely many firings we reach $E_{N}$ such that $\deg(E_N^-)=0$, i.e.\ $0\le E_N\sim E$.
\end{proof}

\begin{rem}
  In algebraic geometry, Fujita's approximation theorem implies that the (classical) volume is actually a limit; see Theorem 11.4.4 and Example 11.4.7 in \cite{Lazarsfeld_2004_II}. In our case, \autoref{lemma: lower_bound} implies that the tropical volume is also a limit, that is
\[
    \vol_G^{\BT}(D)=\lim_{\ell\to\infty}\frac{r(\ell D)}{\ell}.
\]

The constant $C_G$ in \autoref{lemma: lower_bound} is not optimal, but for our purposes we do not need an optimal constant.   
\end{rem}

\begin{example}\label{exa: illustration lemma lower_bound}
    Let $G$ be a multigraph with $4$ vertices and $4$ edges, as in \autoref{fig: exa_divisor_illus_lemma_lower_bound}.
    \begin{figure}[ht]
        \centering
        \begin{tikzpicture}[line width = 1pt, dot/.style={circle, fill, inner sep=1pt, minimum size=2pt}]
            \coordinate (v0)   at    (.5,-.5);
            \coordinate (v1)   at   (.5,.5);
            \coordinate (v2)   at   (-.5,.5);  
            \coordinate (v3)   at   (-.5,-.5); 
            
            \draw[fill] (v0) circle [radius=2pt] node[xshift=7pt, yshift=-7pt] {$v_0$};
            \draw[fill] (v1) circle [radius=2pt] node[xshift=7pt, yshift=7pt] {$v_1$};
            \draw[fill] (v2) circle [radius=2pt] node[xshift=-7pt, yshift=7pt] {$v_2$};
            \draw[fill] (v3) circle [radius=2pt] node[xshift=-7pt, yshift=-7pt] {$v_3$};

            \draw[-] (v0) to (v3);
            \draw[-] (v0) to (v1);
            \draw[-] (v2) to (v1);
            \draw[-] (v2) to (v3);
        \end{tikzpicture}
        \caption{Graph $G$.}
        \label{fig: exa_divisor_illus_lemma_lower_bound}
    \end{figure} 
    Thus $C_G=2(4+1)(4+1) = 50$. Let $D, D'\in\Div(G)$ be divisors given by
    \[
        D=90v_0 - 8v_1 - 8v_2 - 8v_3 \quad \text{ and } \quad
        D' = 10v_0 + 2v_1 + 2v_2 + 2v_3.
    \]
    As in (the proof of) \autoref{lemma: lower_bound}, $\deg(D)\ge C_G$. Now, define 
    \[
        E\coloneqq D-D'=80v_0 - 10v_1 - 10 v_2 - 10v_3.
    \]
    Note that $\mcF(E)=\{v_0\}$ and $\mcN(E)=\{v_1,v_2,v_3\}$. Firing from $\mcF(E)$, repeatedly, $12$ times, we obtain the divisor
    \[
        E_{12}=56v_0 + 2v_1 - 10 v_2 +2v_3.
    \]
    Our new partition of $V(G)$ is given by $\mcF(E_{12})=\{v_0,v_1,v_3\}$ and $\mcN(E_{12})=\{v_2\}$. Firing from $\mcF(E_{12})$ we obtain 
    \[
        E_{13}=56v_0 + v_1 - 8v_2 +v_3.
    \]
    
    At this moment, it is not difficult to realize that after firing repeatedly $8$ more times, we obtain $E_{21}=48v_0 +v_1+v_3$,
    that is, the effective divisor $E_{21}\in |D-D'|$. 
    
    We could use any effective divisor $F$ of fixed degree $\deg(D')$, and we would also conclude that $|D-F|\neq \varnothing$. This shows that $r(D)\ge \deg(D) - C_G.$  Here, with notation as in the proof of \autoref{lemma: lower_bound}, the set $\mcN_{N_0}$ equals to $\{v_1,v_2,v_3\}$, and in particular $v_0\in V(G)\smallsetminus \mcN_{N_0}$.
\end{example}

\begin{rem}\label{rem: D(v) ge val(v)}
    If the degree of $D$ is large enough, say $\deg(D)\ge 2C_G$, then by continuing chip-firing as in the proof of \autoref{lemma: lower_bound}, we eventually obtain an effective divisor $E_n\sim D$ such that $e_{v,n}\ge \val(v)$ for all vertices $v\in V(G)$, where the $e_{v,n}$ are the coefficients appearing in \autoref{eq:coeff}. Moreover, setting $\mcF_{2}(D)\coloneq\{v\in V(G)\mid d_v\ge 2\val(v)\}$ and $\mcN_2(D)\coloneqq V(G)\smallsetminus \mcF_{2}(D)$, we observe  that as $\deg(D)\ge 2C_G > \sum_{v\in V(G)} 4\val(v)$, it follows that $\mcF_2(D)$ is not empty. Thus, mimicking the steps in the proof of \autoref{lemma: lower_bound} we can conclude that, after finitely many firings, we obtain $0\le E_n\sim D$ such that $e_{n,v}\ge \val(v)$ for each vertex $v\in V(G)$.  
\end{rem}

\begin{thm}\label{prop: deg=asymptotic dim}
    Let $D\in\Div(G)$, then $\vol_G^{\BT}(D) = \max\{\deg(D),0\}$.
\end{thm}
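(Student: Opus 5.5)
We split according to the sign of $\deg(D)$ and squeeze $r(\ell D)/\ell$ between two bounds that both tend to $\max\{\deg(D),0\}$.

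\emph{Case $\deg(D)<0$.} By \autoref{rem: rank_negative_degree}, $r(\ell D)=-1$ for every $\ell\ge 1$. Hence $r(\ell D)/\ell=-1/\ell\to 0$, so $\vol_G^{\BT}(D)=0=\max\{\deg(D),0\}$.

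\emph{Case $\deg(D)=0$.} Here $r(\ell D)\in\{-1,0\}$ for all $\ell\ge1$. Indeed, $\deg(\ell D)=0\ge -1$, so \autoref{rem: rank_negative_degree} gives $r(\ell D)\le \deg(\ell D)=0$, while $r(\ell D)\ge -1$ always. Thus $|r(\ell D)/\ell|\le 1/\ell\to0$ and the volume is $0$.

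\emph{Case $\deg(D)>0$.} For the upper bound, $\deg(\ell D)=\ell\deg(D)\ge -1$, so \autoref{rem: rank_negative_degree} gives $r(\ell D)\le \ell\deg(D)$. Therefore $\limsup_\ell r(\ell D)/\ell\le\deg(D)$. We cannot use \eqref{eq: r_asymptotic} directly here, since it assumes $r(D)\ge0$; the degree bound from \autoref{rem: rank_negative_degree} is what we actually need. For the lower bound, apply \autoref{lemma: lower_bound} to the divisor $\ell D$, which gives $r(\ell D)\ge \ell\deg(D)-C_G$ with $C_G$ independent of $\ell$. Dividing by $\ell$ yields
\[
\deg(D)-\frac{C_G}{\ell}\le \frac{r(\ell D)}{\ell}\le \deg(D),
\]
so the limit exists and equals $\deg(D)$.

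There is no real obstacle once \autoref{lemma: lower_bound} is available, since all the combinatorial work (the chip-firing argument) is already contained there. The only point requiring care is that the constant $C_G$ depends on $G$ alone and not on the divisor, which is exactly what makes the error term $C_G/\ell$ vanish. Combining the three cases gives $\vol_G^{\BT}(D)=\max\{\deg(D),0\}$, and the limsup is in fact a limit.
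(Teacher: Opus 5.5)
Your proof is correct and is essentially the paper's argument: the upper bound $r(\ell D)\le \ell\deg(D)$ comes from \autoref{rem: rank_negative_degree}, the lower bound $r(\ell D)\ge \ell\deg(D)-C_G$ from \autoref{lemma: lower_bound} applied to $\ell D$, and $-1\le r(\ell D)\le 0$ settles the case $\deg(D)\le 0$. Splitting that nonpositive case into $\deg(D)<0$ and $\deg(D)=0$ is only a cosmetic difference.
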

\begin{proof}
    If $\deg(D)\leq0$, then by \autoref{rem: rank_negative_degree}  for any $\ell\in \BN$, we have $-1\leq r(\ell D) \leq 0$. Thus, it follows that  $\vol_G^{\BT}(D)=0$.
    \autoref{lemma: lower_bound} implies that $r(\ell D)\ge \ell\deg(D) - C_G$ for  all $\ell$. If $\deg(D)>0$, then by \autoref{rem: rank_negative_degree} $\deg(\ell D)\ge r(\ell D)$ for all $\ell\in\BN$. Thus,
    \[
        \frac{\ell\deg(D)}{\ell} \ge \frac{r(\ell D)}{\ell}\ge \frac{\ell\deg(D)-C_G}{\ell},
    \]
    for all $\ell>0$. Applying $\limsup$ with $ \ell\to\infty$, we finish the proof.
\end{proof}
 
Given $D\in\Div(G)$ we define its \emph{Euler characteristic} as 
\[
    \chi(D)\coloneqq r(D) - r(K_G-D),
\]
where $K_G$ denotes the canonical divisor on $G$.
Since our multigraphs $G$ allow loops, the Euler characteristic $\chi(D)$ does not satisfy the Riemann--Roch theorem in general. The following theorem shows that, asymptotically, it behaves as expected.
\begin{thm}[Asymptotic Riemann--Roch for multigraphs]\label{thm:asymptotic-rr_graph}
    Let $D\in\Div(G)$, then
    \[
        \chi(\ell D)=\deg(D)\ell + o(1).
    \]
\end{thm}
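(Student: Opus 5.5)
The plan is to estimate the two terms of $\chi(\ell D)=r(\ell D)-r(K_G-\ell D)$ separately, using only \autoref{lemma: lower_bound} and \autoref{rem: rank_negative_degree}. These give two-sided bounds on $r$ with an error independent of $\ell$. The upshot is that $\chi(\ell D)-\ell\deg(D)$ stays bounded by a constant depending only on $G$. I read the statement in the normalized sense $\chi(\ell D)/\ell=\deg(D)+o(1)$, which is the form that matches $\vol_G^{\BT}$. In general I do not expect an error term that tends to zero without dividing by $\ell$, since $\chi$ is integer-valued and loops break exact Riemann--Roch. I split according to the sign of $\deg(D)$.

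\emph{Case $\deg(D)>0$.} For all $\ell$ with $\ell\deg(D)>\deg(K_G)$, the divisor $K_G-\ell D$ has negative degree, so $r(K_G-\ell D)=-1$ by \autoref{rem: rank_negative_degree}. On the other hand, \autoref{lemma: lower_bound} and \autoref{rem: rank_negative_degree} give
\[
\ell\deg(D)-C_G\le r(\ell D)\le \ell\deg(D).
\]
Hence $\ell\deg(D)+1-C_G\le\chi(\ell D)\le \ell\deg(D)+1$ for $\ell\gg0$.

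\emph{Case $\deg(D)<0$.} Here $r(\ell D)=-1$ for all $\ell\ge1$. The divisor $K_G-\ell D$ has degree $\deg(K_G)-\ell\deg(D)$, which tends to $+\infty$. Applying \autoref{lemma: lower_bound} and the upper bound $r\le\deg$ of \autoref{rem: rank_negative_degree} (valid once the degree is $\ge -1$) to $K_G-\ell D$, we get
\[
\deg(K_G)-\ell\deg(D)-C_G\le r(K_G-\ell D)\le \deg(K_G)-\ell\deg(D).
\]
Thus $\chi(\ell D)=\ell\deg(D)+O(1)$, with the constant depending only on $\deg(K_G)$ and $C_G$.

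\emph{Case $\deg(D)=0$.} Every $\ell D$ has degree $0$, so $-1\le r(\ell D)\le 0$. Likewise $r(K_G-\ell D)$ lies between $-1$ and $\max\{\deg(K_G),-1\}$, since $\deg(K_G-\ell D)=\deg(K_G)$ is fixed. Hence $\chi(\ell D)$ is bounded, which is the claim because $\ell\deg(D)=0$.

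Dividing by $\ell$ and letting $\ell\to\infty$ in each case gives $\chi(\ell D)/\ell\to\deg(D)$, i.e.\ $\chi(\ell D)=\deg(D)\ell+O(1)$.

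The only real input is the uniform lower bound of \autoref{lemma: lower_bound}, which has already been established. The main point needing care is applying it to $K_G-\ell D$ rather than to $\ell D$. This is harmless, since the constant $C_G$ does not depend on the divisor. One should also check that the upper bound $r\le\deg$ is used only when the degree is at least $-1$.
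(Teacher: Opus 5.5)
Your proof is correct, and it takes a more direct route than the paper.

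\textbf{The paper's route.} The paper handles $r(\ell D)$ through the volume formula \autoref{prop: deg=asymptotic dim}. For $\deg(D)<0$ it controls $r(K_G-\ell D)$ only indirectly. It fixes $\ell'$ with $r(K_G-\ell'D)\ge 0$ and uses the superadditivity $r\big((K_G-\ell'D)-\ell D\big)\ge r(K_G-\ell'D)+r(-\ell D)$ of \cite[Lem.\ 2.1]{Baker_Norine_2007}, together with the volume of $-D$. This introduces an index shift by $\ell'$, which is harmless but left implicit. The conclusion obtained there is only $\chi(\ell D)/\ell\to\deg(D)$.

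\textbf{Your route.} You apply the uniform bound of \autoref{lemma: lower_bound} directly to $K_G-\ell D$. This is legitimate because $C_G$ does not depend on the divisor. You pair it with $r\le\deg$ in degree $\ge -1$. This removes the need for superadditivity and the auxiliary $\ell'$. It also gives the sharper bound $|\chi(\ell D)-\ell\deg(D)|\le C$, with $C$ depending only on $C_G$ and $\deg(K_G)$. The underlying input is the same in both proofs, since the volume formula is itself just the sandwich $\deg-C_G\le r\le\deg$; what your packaging buys is the quantitative $O(1)$ error.

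\textbf{Your reading of the statement.} Reading ``$o(1)$'' in the normalized sense is right, and it is exactly what the paper's proof establishes. The literal reading fails already for trees: there $\chi(\ell D)=\ell\deg(D)+1$ for $\deg(D)>0$ and $\ell\gg0$. So the obstruction is the constant term $1-g$ that is present even under exact Riemann--Roch, not the presence of loops. An $O(1)$ error, as you prove, is the best one can hope for in general.
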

\begin{proof}
    If $\deg(D)\ge0$, then $r(K_G-\ell D)$ is bounded. Thus, by \autoref{prop: deg=asymptotic dim}
    \[
        \limsup_{\ell\to\infty}\frac{\chi(\ell D) - \deg(D)\ell}{\ell}=0.
    \]
    If $\deg(D)<0$, then $r(\ell D)\le 0$ for all $\ell\in\BN$. For a large enough fixed $\ell'\in\BN$ and all $\ell \in \BN$, it follows that $\deg\!\big((K_G-\ell'D)-\ell D\big)\geq -1$ and $r(K_G-\ell'D)\ge0$ by \autoref{lemma: lower_bound}. Thus, by \autoref{rem: rank_negative_degree} and by \cite[Lem.\ 2.1]{Baker_Norine_2007} it follows that
    \begin{equation*}
        \deg\!\big((K_\Gamma - \ell'D) - \ell D\big) \ge r\big((K_\Gamma - \ell'D) - \ell D\big) \ge r(K_\Gamma - \ell'D)+ r(-\ell D).
    \end{equation*}
    The inequalities above and \autoref{prop: deg=asymptotic dim} imply that 
    \[
        \limsup_{\ell\to\infty}\frac{\chi(\ell D) - \deg(D)\ell}{\ell}=0.
    \]
    This finishes the proof.
\end{proof}

\section{Asymptotics of the Baker--Norine rank of linear series on tropical curves}\label{sec: Ranks of linear systems on tropical curves}

In this section, we extend the asymptotic analysis of the Baker–Norine rank to tropical curves, viewed as metric multigraphs. After recalling the relevant definitions of divisors and linear series in this context, we adapt the combinatorial techniques we have seen in \autoref{sec: Ranks of linear systems on graphs} to the metric setting. The main result shows that the tropical volume retains the same simple description as in the discrete case.  Here we use definitions and notations as in \cite{Gathmann_2008} and \cite{Mikhalkin_Zharkov_2008}.

\begin{defin}[Tropical curve, {\cite[Def.\ 3.1]{Mikhalkin_Zharkov_2008}}]\label{def: tropical curve}
    A \emph{tropical curve} is a pair $(C,\mathcal{A})$, where $C$ is a locally finite simplicial complex of dimension $1$, and $\mathcal{A}$ is a complete tropical structure on $C$.  
\end{defin} 
Vertices of $C$ with valence $1$ are called \emph{endpoints}. The \emph{finite part} $C^\circ$ of a tropical curve $C$ is the complement of the endpoints.
 
Compact tropical curves can be seen as metric multigraphs; see \cite[Prop.\ 3.6]{Mikhalkin_Zharkov_2008}. We recall the definition:
\begin{defin}[Metric multigraph{, \cite[Def.\ 1.1]{Gathmann_2008}}]\label{exe: Graphs as tropical curves}   
A \emph{metric multigraph} $\Gamma$ is a topological space obtained from a finite multigraph by assigning to each edge $e$ a length $ \ell(e) \in \mathbb{R}_{>0}\cup\{\infty\}$, and identifying $e$ with the interval $[0,\ell(e)]$. Each edge $e$ of length $\infty$ is identified with $[0,\infty]$, and is such that the end  $\infty$ has valence $1$. We endow $\Gamma$ with the path metric $d_{\Gamma}:\Gamma\times \Gamma \to \BR_{\ge0}\cup\{\infty\}$ induced by these lengths. Let $e$ be an edge of $\Gamma$, we write $d_{e}\coloneqq d_{\Gamma}|_{e}$. 
\end{defin}

A \emph{divisor} $D=\sum d_vv$ on a tropical curve $C$ is an element of the abelian group $\Div(C)\coloneqq \bigoplus_{p\in C}\Z p$. The \emph{support} of $D$ is $\Supp(D)\coloneqq\{v\in C\mid d_v\neq 0\}$. A \emph{rational function} on $C$ is a continuous function $f : C \to \BR \cup \{\pm\infty\}$ such that the restriction of $f$ to any edge of $C$ is a piecewise linear integral affine function with a finite number of pieces. Let $D, D'\in\Div(C)$, we say that $D$ is \emph{equivalent} to $D'$, and write $D\sim D'$ if $D=D'+\ddiv(f)$ for some rational function $f$; see \cite[\S\ 1, 3]{Gathmann_2008}.
 
\begin{defin}\label{def: complete linear series for metric graphs}
    Let $D\in\Div(C)$, its \emph{complete linear series} is defined as 
    \[
        |D|\coloneqq \{ D'\in \Div_{+}(C) \mid D'\sim D\}.
    \]
    Its \emph{Baker--Norine rank}  (or BN-rank) 
    $r(D)$ is defined as: 
    \begin{equation*}
        r(D)\coloneqq 
        \begin{cases}
            -1,  \text{ if } |D|=\varnothing, \\
            \max\!\big\{d \; | \;  |D-E|\neq\varnothing\; \forall E \in \Div_{+}^{d}(G) \big\}, \text{ otherwise}.
        \end{cases}
    \end{equation*}
\end{defin}

\begin{rem}\label{rem: prop_trop_linear_series}
Although in a different setup, the rank of a divisor on a tropical curve is defined exactly as the rank of a divisor on a multigraph; see \autoref{def: complete linear series for multigraphs}. Thus, the same results stated in \autoref{rem: rank_negative_degree} for divisors in multigraphs are true for divisors on tropical curves.
\end{rem}

Let $|D_1|$ and $|D_2|$ be non-empty, and $E_i\in\Div_{+}^{r(D_i)}(C)$ for $i=1,2$. Thus $|D_i-E_i|$ is non-empty for each $i=1,2$, which implies that $|D_1+D_2 - (E_1+E_2)|\neq\varnothing$. Therefore,
\begin{equation}\label{ineq: dim|D|_is_convex}
    r(D_1+ D_2)\ge r(D_1) + r(D_2).
\end{equation}

If both $|D_1|$ and $|D_2|$ are empty, then the Inequality \eqref{ineq: dim|D|_is_convex} holds; however, if only one of them is empty, then the Inequality \eqref{ineq: dim|D|_is_convex} does not hold. Indeed, let $D_1$ be an effective divisor with $r(D_1)>0$ and $D_2$ a divisor with $\deg(D_2)<-\deg(D_1)$. As $\deg(D_1+D_2)<0$, it follows that $r(D_1+D_2)=-1 < r(D_1)+r(D_2)$.

On the tropical curve $C$, each interior point of an edge can be considered as a vertex of valence $2$. Therefore, we have $\#V(C)=\infty$. To distinguish these vertices, we define 
$\mathcal{V}(C)\coloneqq \{v\in V(C) \mid \val(v)\neq 2\}$. This is a finite set. 
In the special case when $C$ is a loop, we fix a point $p\in C$ and put $\mcV(C)=\{p\}$. In the same direction, we represent the set of edges of $C$ by $\mcE(C)$.

Given an edge $e$ of $C$, we write $e^\circ\coloneqq e\smallsetminus \varepsilon(e)$ for its interior points. 
As before, given a divisor $D=\sum_{}d_v v\in\Div(C)$, we write $D=D^+ + D^-$ where both $D^+$ and $-D^-$ are effective subdivisors of $D$. Define  
    \begin{align*}
        \mcF(D)\coloneqq \{p\in \mcV(C)\mid d_p\ge \val(p) \}, \quad \text{ and } \quad
        \mcN(D) \coloneqq \mcV(C)\smallsetminus \mcF(D).
    \end{align*}
    
    For $U\subseteq C$, we set $D|_{U}\coloneqq \sum_{v\in\Supp(D)\cap U}d_v v$.
   
\begin{prop}\label{lemma: lower_bound_trop_curves}
    There exists $c_C\in\BN$ such that $r(D) \ge \deg(D) - c_C$, for all $D$.
\end{prop}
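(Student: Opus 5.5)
We prove that $|D-E|\neq\varnothing$ for every $E\in\Div_+(C)$ with $\deg(E)=\deg(D)-c_C$, for a suitable constant $c_C$ depending only on $C$. We take $C$ compact, i.e.\ a metric multigraph $\Gamma$ with all edge lengths finite. If $\deg(D)<c_C$ there is nothing to prove, since then $r(D)\ge -1\ge\deg(D)-c_C$.

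The main difficulty compared with \autoref{lemma: lower_bound} is that $D-E$ may have support in the interiors of edges. There the points have valence $2$, and we cannot simply fire vertex sets. So the strategy is to first move all chips into the finite set $\mcV(C)$ at bounded cost, and then run the combinatorial argument.

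\textbf{Step 1: reducing to divisors supported on $\mcV(C)$.} Fix an edge $e$ with endpoints $a,b$ (equal for a loop) and an interior point $x\in e^\circ$. If $e$ is not a loop, $x\sim a+b-y$ for a suitable interior point $y$. This follows from the rational function that is linear with slope $1$ from $a$ and from $b$ up to $x$ and $y$, and constant on the rest of $C$; choosing $y$ symmetric to $x$ makes the slopes match. Cleaner still, two chips on an edge can always be moved: $x+x'\sim a+z$ for some $z\in e$, by sliding. Applying such slides repeatedly, we show that any $F\in\Div(C)$ is equivalent to $F'+\sum_{e}\varepsilon_e$. Here $F'$ is supported on $\mcV(C)$, and for each edge $e$ the term $\varepsilon_e$ is either $0$ or a single point of $e^\circ$. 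In other words, chips of the same sign on one edge can be paired off and pushed to the endpoints. Opposite signs on an edge are handled the same way, since $x-x'\sim a-z$ (a rational function supported on the segment). Thus each edge contributes at most one leftover chip of either sign, so $\deg$ of the leftover is bounded in absolute value by $\#\mcE(C)$. For a loop we use $\mcV(C)=\{p\}$ together with the fact that $x+x'\sim p+z$ on a circle.

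\textbf{Step 2: the combinatorial argument.} Set $F=D-E$. By Step 1 we have $F\sim F'+R$, where $F'$ is supported on $\mcV(C)$ and $R$ has at most $\#\mcE(C)$ chips, each of coefficient $\pm1$. Write $R=R^++R^-$; the effective part $R^+$ can be ignored. Each negative chip $-x$ with $x\in e^\circ$ is absorbed by paying one chip from an endpoint: $a-x\sim b'$ for some point $b'\in e$, which is effective. So it suffices to make $F'$ effective with at least one extra chip available at an endpoint of each edge carrying a negative leftover chip.

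Let $G$ be the underlying finite multigraph on $\mcV(C)$, with the lengths forgotten. Firing a vertex set $S\subseteq\mcV(C)$ in $G$ is realized on $C$ by a rational function: it has slope $1$ along a short initial segment of each edge leaving $S$, and in general we take the function $\min(\epsilon, d(\cdot,S))$. This function moves one chip from $S$ along each such edge, but lands it at distance $\epsilon$ inside the edge rather than at the neighbouring vertex. To land chips at vertices instead, we use the function $\min(d(\cdot,S),\ell_{\min})$ restricted appropriately. This is the main obstacle, because edges have different lengths. The fix is the firing move from Step 1: after firing, push the interior chips along each edge to its far end. On each edge leaving $S$ the chips arrive in one batch, so this sliding composes to an equivalence equal to graph-chip-firing in $G$. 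Granting this, the chip-firing relation of $G$ is contained in linear equivalence on $C$. Therefore \autoref{lemma: lower_bound} and \autoref{rem: D(v) ge val(v)}, applied to $F'$, give an effective divisor equivalent to $F'$ with at least $\val(v)\ge1$ chips at every vertex. This holds provided $\deg(F')\ge 2C_G$, which we guarantee by taking
\[
c_C\coloneqq 2C_G+\#\mcE(C).
\]

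\textbf{Step 3: conclusion.} Combining Steps 1 and 2, we obtain $D-E\sim(\text{effective})+R^+ +\sum(a_e-x_e)$, where each term $a_e-x_e$ is equivalent to an effective divisor. Hence $|D-E|\neq\varnothing$, and therefore $r(D)\ge\deg(D)-c_C$.

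We expect the real work to lie in verifying Step 2, namely that set-firings in $G$ are realized on $C$ despite the unequal edge lengths. If the direct construction is awkward, we will use instead the fact that $\ddiv$ of $\min(d(\cdot,v),\delta)$ moves one chip from $v$ onto each incident edge. Combining this with the sliding of Step 1 then reproduces graph firing.
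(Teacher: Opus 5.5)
Your Step 1 is sound. It is essentially the paper's ``concentration of degree'' step, the move $p_1+p_2\sim v_1+\widetilde p_2$. In fact edge-local rational functions fix only the degree and the moment $\sum n_i t(x_i)$ of the chips on an edge, so you can even arrange every leftover chip to have coefficient $+1$.

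The proof breaks in Step 2, at the claim that set-firing in the underlying multigraph $G$ on $\mcV(C)$ is realized by linear equivalence on $C$. This is false unless the edge lengths are suitably equal. Take the theta curve with trivalent vertices $a,b$ and three edges of lengths $1,1,2$. Firing $a$ in $G$ gives $3b-3a$. A rational function with divisor $\pm(3b-3a)$ would be linear on each edge, with integer slopes satisfying $m_1=m_2=2m_3=f(b)-f(a)$ and $m_1+m_2+m_3=\pm3$. This forces $m_1=\pm 6/5$, which is not an integer.

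Your proposed fix does not repair this. The function $\min(\epsilon,d(\cdot,S))$ deposits exactly one chip on each edge leaving $S$. A lone chip in the interior of a non-bridge edge is not linearly equivalent to any other point. Your two-chip slides preserve the moment on the edge, so a lone chip cannot be ``pushed to the far end''. After normalizing, how many chips actually reach the far endpoint depends on the edge length. Hence no combination of these moves reproduces graph firing, and you cannot import \autoref{lemma: lower_bound} and the subsequent remark for $G$. For rational lengths you could subdivide to a model with one common edge length, where graph firing is realized, but irrational lengths would remain.

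What is missing is a transport argument run on $C$ itself, which is what the paper does. It fires each $v\in\mcF(F)$ only by the distance $s_v$ to the nearest other point of $\mcV(C)\cup\Supp(F^-)$, then re-concentrates the degree. It then repeats the monotonicity argument of \autoref{lemma: lower_bound} directly on $C$. The sets $\bigcup_{i\le\ell}\mcN(E_i)$ stabilize to a proper subset of $\mcV(C)$. Along a shortest path from a vertex outside this subset to a negative chip, chips advance by steps of length bounded below. So $\deg(E^-)$ strictly increases after finitely many rounds. Alternatively, the Riemann--Roch theorem for metric graphs gives $r(D)\ge\deg(D)-g$ at once.

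A smaller slip: $a-x\sim b'$ cannot hold, since the two sides have degrees $0$ and $1$. The correct relation is $a+b-x\sim z$, with $z$ the point of $e$ symmetric to $x$. You can also avoid negative leftovers altogether, as noted above.
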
    
 
\begin{proof}
    Let $c_C\coloneqq 2\big(\#\mcV(C) + 1\big)\big(\#\mcE(C)+1\big)$. If $\deg(D)<c_C$, then $r(D)\ge-1\ge\deg(D)-c_C$.
    
    Thus, assume $\deg(D)\ge c_C$ and let $F'$ be an arbitrary effective divisor of degree $\deg(D)-c_C$. Let $F\coloneqq D-F'$ and note that $\deg(F)=c_C$. If exists $e\in\mcE(C)$ with $\deg(F^+|_{e^\circ})\ge 2$, let the endpoints of $e$ be $\varepsilon(e)=\{v_1,v_2\}$, note that $e$ can be a loop i.e.\ $v_1=v_2$. Let $d_{C}$ be the path metric from \autoref{exe: Graphs as tropical curves}, and $p_i \in\Supp(F^+|_{e^\circ})$, for $i=1,2$, be such that
    \[
        s_i\coloneqq d_{e}\big(v_i,p_i\big) = \min\!\left\{d(v_i, p) \; | \; p \in \Supp(F^+)\right\}\quad \text{ for } \quad i=1,2,
    \]
    and assume $s_1\leq s_2$. As $s_1$ is minimal, there exists $f_e$, constant in $C\smallsetminus \{e\}$, such that
    \[
        \ddiv(f_e)= v_1 - p_1 - p_2 + \widetilde{p}_2;
    \]
    see \autoref{fig: f on the edge e}.
    \begin{figure}[ht]
        \centering
        \begin{tikzpicture}[line width = 1pt]   
            \draw[red, line width = 1pt, -] (1,1.7) -- (1.7,1) -- (2.6,1) -- (3.3,1.7) -- (4,1.7) node[red, xshift=8pt] {$f_e$};
            \draw[line width = 1pt, -] (1,0) -- (4,0) node[xshift=10pt] {$e$};
            \draw[fill] (1,0) circle [radius=2pt] node[yshift=-9pt] {$v_1$};
            \draw[fill] (4,0) circle [radius=2pt] node[yshift=-9pt] {$v_2$};
            \draw[fill] (1.7,0) circle [radius=2pt] node[yshift=-9pt] {$p_1$};
            \draw[fill] (2.6,0) circle [radius=2pt] node[yshift=-9pt] {$p_2$};
            \draw[fill] (3.3,0) circle [radius=2pt] node[yshift=-9pt] {$\widetilde{p}_2$};

            \draw[dashed, -] (1,0) -- (1,1.7) node[] {};
            \draw[dashed, -] (1.7,0) -- (1.7,1) node[] {};
            \draw[dashed, -] (2.6,0) -- (2.6,1) node[] {};
            \draw[dashed, -] (3.3,0) -- (3.3,1.7) node[] {};
            \draw[dashed, -] (4,0) -- (4,1.7) node[] {};
            \draw[decoration={brace,raise=3pt},decorate]  (2.6,0) -- node[above=3pt] {$s_1$} (3.3,0);
            \draw[decoration={brace,raise=3pt},decorate]  (1,0) -- node[above=3pt] {$s_1$} (1.7,0);
            \node[] at (1.45,1.55) {$1$};
            \node[] at (2.90,1.55) {$1$};
        \end{tikzpicture}
        \caption{The function $f_e$ on $e$, which is constant in $C\smallsetminus \{e\}$.}
        \label{fig: f on the edge e}
    \end{figure}
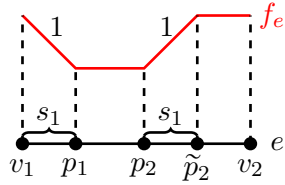
    Note that $F_1\coloneqq F + \ddiv(f_e) \sim F$ has the following properties: 
    \begin{enumerate}[label = (CD\arabic*)]
        \item\label{item: CD1} $\deg(F_1|_{\mcV(C)}) \ge \deg(F|_{\mcV(C)})$, and the inequality is strict if there exists $e_1\in \mcE(C)$ with $\deg(F^+|_{e_1^\circ})\ge 2$;
        \item\label{item: CD3} $\deg(F_1^-) \ge \deg(F^-)$;
        \item\label{item: CD4} $d_{e}(\widetilde{p}_2,v_2)=d_{e}(p_2,v_2)-s_1$.
    \end{enumerate}
    
    As our only condition is $\deg(F^+|_{e^\circ})\ge 2$, we repeat this procedure until we reach $F_n\sim F$ such that $\deg(F_n|_{e^\circ})\le 1$ for all edges $e$. We say that $F_n$ has \emph{degree concentrated} at $\mcV(C)$. 
    Thus, we assume that the degree of $F$ is concentrated at $\mcV(C)$.

    As $\deg(F|_{\mcV(C)})\ge c_{C}-\#\mcE(C)\ge \sum_{v\in\mcV(C)}\val(v)$, it follows that  $\mcF(F)$ is not empty.     
    For each $v\in \mcF(F)$ let $\{\alpha_{1,v},\dots,\alpha_{\val(v),v}\}$ be the set of edges adjacent to $v$. Let $p_{i,v}$ in $\alpha_{i,v}^\circ$, for $i=1,\dots,\val(v)$, be such that it realize the distance 
    \[
        s_v\coloneqq d_{C}\!\left(v, \big(\mcV(C)\cup\Supp(F^-)\big)\smallsetminus\{v\}\!\right).
    \]
    
    Let $f_v$ be a global rational function with slope $1$ from $v$ to $p_{i,v}$  for $i=1,\dots,\val(v)$, which is possible because $s_v$ is minimal, and $f_v$ is constant in the complement of the $s_v$-neighborhood of $v$; see \autoref{fig: rational function f_v}.
    Therefore,  by construction
    \[
        \ddiv(f_v)= -\val(v)v + \sum_{i=1}^{\val(v)} p_{i,v} \quad \text{ for each } \quad  v\in \mcF(F).
    \]
    Thus, $F\sim F_1\coloneqq  F + \sum_{v\in \mcF(F)}\ddiv(f_v)$.
\begin{figure}[h!]
\tdplotsetmaincoords{70}{120}
\begin{tikzpicture}[tdplot_main_coords, scale=2.7, line width=1pt]
    \pgfmathsetmacro{\a}{.8}
    \pgfmathsetmacro{\b}{.3}
  \node[] (0) at (0,0,0)  {$\bullet$};
  \node[] (v) at (0,0,-.15)  {$v$};
  \node[] (psvx) at (-.5,0,0)  {$\bullet$};
  \node[] (svx) at (-.6,0,-.15)  {$p_{1,v}$};
  \node[] (psvy) at (0,.5,0)  {$\bullet$};
  \node[] (svy) at (0,.5,-.15)  {$p_{2,v}$};
  \node[] (psvn) at (.36,-.36,0)  {$\bullet$};
  \node[] (svn) at (.36,-.36,-.15)  {$p_{3,v}$};
  \node[red] (fv) at (.85,-.85,\a)  {$f_v$};

  \draw[-] (0,0,0) -- (0,1,0);
  \draw[-] (0,0,0) -- (.73,-.73,0);
  \draw[-] (0,0,0) -- (-1,0,0);

  \draw[-,red] (0,0,\b) -- (0,.5,\a);
  \draw[-,red] (0,.5,\a) -- (0,1,\a);
  \draw[-,red] (0,0,\b) -- (-.5,0,\a);
  \draw[-,red] (-.5,0,\a) -- (-1,0,\a);
  \draw[-,red] (0,0,\b) -- (.36,-.36,\a);
  \draw[-,red] (.36,-.36,\a) -- (.73,.-.73,\a);

  \draw[-,dashed, line width=1pt] (0,0,\b) -- (0,0,0);
  \draw[-,dashed, line width=1pt] (0,.5,\a) -- (0,.5,0);
  \draw[-,dashed, line width=1pt] (-.5,0,\a) -- (-.5,0,0);
  \draw[-,dashed, line width=1pt] (.36,-.36,\a) -- (.36,-.36,0);
\end{tikzpicture}
\caption{The global rational function $f_v$ in a $s_v$-neighborhood of the vertex $v$, with $\val(v)=3$.}
\label{fig: rational function f_v}
\end{figure}
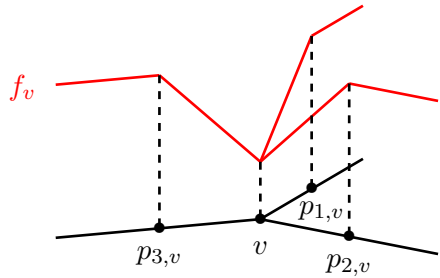

Note that, $\deg(F_1^-)\ge\deg(F^-)$, and with strict inequality if $p_{i,v}\in\Supp(F^-)$ for some $v\in \mcF(F)$ and $i\in\{1,\dots,\val(v)\}$.  If necessary, concentrate the degree of $F_1$ to obtain a divisor $E_1\sim F_1\sim F$. Repeating this, we obtain divisors $E_{\ell}=\sum e_{v,\ell} v$ with $E_{\ell+1}\sim E_{\ell}$, each $E_\ell$ has concentrated degree at $\mcV(C)$, and $-\infty <\deg(E_{\ell}^-)\le \deg(E_{\ell+1}^-)\le 0$ for each $\ell\in\BN$. We set $E_0\coloneqq F$. Furthermore:
\begin{enumerate}[label = (ES\arabic*)]
    \item \label{prop: ES1} $0\le e_{v,\ell+1}\le e_{v,\ell}$ for all $\ell$, and $v\in\mcF(E_\ell)$;
    \item \label{prop: ES2} $e_{v,\ell+1}\ge e_{v,\ell}$ for each $\ell\in\BN$, and $v\in \mcN(E_\ell)$. Thus $\deg(E_{\ell+1}^-)\ge\deg(E^-_{\ell})$.
\end{enumerate}

Define $\mcV_\ell\coloneqq \bigcup_{i=0}^\ell \mcN(E_i)\subseteq\mcV(C)$. As $\mcV_{\ell}\subseteq\mcV_{\ell+1}$ and $\mcV(C)$ is finite, it follows that there exists $N_0\in\BN$ such that $\mcV_{N_0+\ell}=\mcV_{N_0}$ for all $\ell\in\BN$. Now, note that if $v\in \mcV_{N_0}$, then $e_{v,N_0+\ell} < 2\val(v)$ for all $\ell\in\BN$. Indeed, if $v\in \mcV_{N_0}$, then $v\in\mcV_{\ell'}$ for some $\ell'\le N_0$. Thus $e_{v,\ell'}<\val(v)$, and $e_{v,\ell'+1}<2\val(v)$ since after a ``chip-firing", $e_{v,\ell'}$ can increase by at most $\val(v)$. If for some $\ell>\ell'$ we have $\val(v)\le e_{v,\ell}<2\val(v)$, then $0\le e_{v,\ell+1}\le e_{v,\ell}<2\val(v)$ by \ref{prop: ES1}.

If $\deg(E_{N_0}^-)=0$, then $0\le E_{N_0}\sim E$ and the proof is finished. If $\deg(E_{N_0}^-)<0$, we prove that 
\begin{equation}\label{ine: inc_neg_degree}
    \deg(E_{N_0+\ell}^-)>\deg(E_{N_0}^-), 
    \text{ for some } \ell\in\BN.
\end{equation}
As $\deg(E_{N_0}^-)$ is finite, the repeated use of the inequality in \eqref{ine: inc_neg_degree} implies $\deg(E_{N_0+\ell}^-)=0$ for some $\ell\in\BN$.

The set $\mcV_{N_0}$ is a proper subset of $\mcV(C)$. Indeed, if not then for each $\ell\ge N_0$
\begin{align*}
    \deg(E_\ell)  = \hspace{-9pt}\sum_{v\in\Supp(E_{\ell})}\hspace{-13pt} e_{v,\ell}\le \#\mcE(C) + \hspace{-5pt}\sum_{v\in\mcV(C)} \hspace{-7pt}e_{v,\ell} \le \#\mcE(C) + \hspace{-5pt}\sum_{v\in\mcV(C)}\hspace{-7pt} \val(v) 
     <  c_C=\deg(E_\ell),
\end{align*}
which is a contradiction. 

If $\deg(E_{N_0+1}^-)>\deg(E_{N_0}^-)$, then we are done. If not, let $q\in \Supp(E_{N_0}^-)$ and $p\in \mcV(C)\smallsetminus \mcV_{N_0}$ be such that $d(p,q)$ is minimal. 
As $C$ is connected, there exists a path that connects $p$ to $q$ and realizes $d(p,q)$, see \autoref{fig: path_p_q_C},
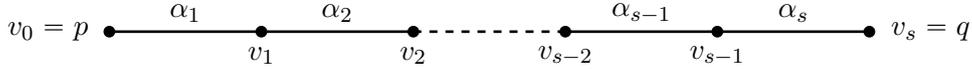
\begin{figure}[ht]
        \centering
        \begin{tikzpicture}
            \node[yshift=7pt] at (0,0) {$\alpha_1$};
            \node[yshift=7pt] at (2,0) {$\alpha_2$};
            \node[yshift=7pt] at (6,0) {$\alpha_{s-1}$};
            \node[yshift=7pt] at (8,0) {$\alpha_{s}$};
            \draw[line width = 1pt, -] (-1,0) -- (3,0) node[] {};
            \draw[line width = 1pt, -, dashed] (3,0) -- (5,0) node[] {};
            \draw[line width = 1pt, -] (5,0) -- (9,0) node[] {};
            \draw[fill] (-1,0) circle [radius=2pt] node[xshift=-23pt] {$v_0=p$};
            \draw[fill] (1,0) circle [radius=2pt] node[yshift=-9pt] {$v_1$};
            \draw[fill] (3,0) circle [radius=2pt] node[yshift=-9pt] {$v_2$};
            \draw[fill] (5,0) circle [radius=2pt] node[yshift=-9pt] {$v_{s-2}$};
            \draw[fill] (7,0) circle [radius=2pt] node[yshift=-9pt] {$v_{s-1}$};
            \draw[fill] (9,0) circle [radius=2pt] node[xshift=23pt] {$v_s=q$};
        \end{tikzpicture}
        \caption{A minimal path in $C$ that connects $p$ to $q$.}
        \label{fig: path_p_q_C}
    \end{figure}
where $v_i\in\mcV(C)$ for $i=0,1,\dots,s-1$. Since  $p\in \mcV(C)\smallsetminus \mcV_{N_0}$ it follows that $e_{p,\ell}\ge\val(p)$ for all $\ell\in\BN$.

Suppose $e_{v_1,N_0}<\val(v_1)$. 
As $p\in\mcF(E_{N_0})$, either: 1) $e_{v_1,N_0+1}\ge e_{v_1,N_0}+1$ or 2) there exists $p_{1}\in \alpha_1^{\circ}$ such that $e_{p_{1},N_0+1}\ge e_{p_{1},N_0}+1$. Let $s_1\coloneqq d_{\alpha_1}(p, p_{1})$. Now, we can see that either: 
1) $e_{v_1,N_0+2}\ge e_{v_1,N_0+1}+1$ or 2) there exists $p_{2}\in \alpha_1^{\circ}$ such that $e_{p_{2},N_0+2}\ge e_{p_{2},N_0+1}+1$ and $d_{\alpha_1}(p_2,v_1)=d_{\alpha_1}(p_1,v_1)-s_1$. Thus, as the length of $\alpha_1$ is finite, there exists $\ell_1\in\BN$ such that $e_{v_1,N_0+\ell_1}>e_{v_1,N_0}$; namely $\ell_1\le \big\lceil \text{length}(\alpha_1)/{s_1} \big\rceil$.

With an analogous argument, we show that there exists $\ell_2\in\BN$ such that $e_{v_2, N_0+\ell_2}>e_{v_2, N_0}$. Now, repeating the idea, we conclude that there exists $\ell_s$ such that 
\[
    e_{q,N_0+\ell_s}>e_{q, N_0}.
\]
Using \ref{prop: ES2} and this last inequality, we obtain Inequality \eqref{ine: inc_neg_degree}.
\end{proof}

In analogy with the combinatorial case, \autoref{definition: Volume of a divisor}, we have following Definition:

\begin{defin}\label{def: Vol_C(D)}
    The tropical \emph{volume} of a divisor $D$ on $C$ is defined as 
    \[
        \vol_{C}^{\BT}(D)\coloneqq\limsup_{\ell\to\infty} \frac{r(\ell D)}{\ell}.
    \]
\end{defin}

\begin{rem}\label{rem:tropi-implies-combi}
    \autoref{lemma: lower_bound_trop_curves} implies \autoref{lemma: lower_bound} in the following way:  In \cite[Def.\ 1.3]{Amini_Caporaso_2013} the authors, from a multigraph $G$, construct a loopless multigraph $\widehat{G}$ by adding one vertex to the interior of each loop $e\in E(G)$. They also point out that the inclusion map $\sigma:\Div(G)\to\Div(\widehat{G})$ is such that $r_{\rBN,G}(D)\ge r_{\rBN,\widehat{G}}\big(\sigma(D)\big)$. This last inequality, combined with \cite[Thm.\ 1.3]{Hladk_2013} allows us to conclude that \autoref{lemma: lower_bound_trop_curves} indeed implies \autoref{lemma: lower_bound}. As a consequence, it also follows that \autoref{prop: deg=asymptotic dim} is implied by the following Theorem: 
\end{rem}

\begin{thm}\label{prop: deg=asymptotic dim trop. curve}
    Let $D \in \Div(C)$, then $\vol_C^{\BT}(D)=\max\{\deg(D),0\}$.
\end{thm}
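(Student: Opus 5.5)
The plan is to mirror the proof of \autoref{prop: deg=asymptotic dim}, replacing \autoref{lemma: lower_bound} by its metric counterpart \autoref{lemma: lower_bound_trop_curves}, and using \autoref{rem: prop_trop_linear_series} to transport the elementary facts of \autoref{rem: rank_negative_degree} to the tropical curve $C$. I will split into the two cases $\deg(D)\le 0$ and $\deg(D)>0$.

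\emph{Case $\deg(D)\le 0$.} If $\deg(D)<0$, then $\deg(\ell D)<0$ for every $\ell\ge 1$, so $|\ell D|=\varnothing$ and $r(\ell D)=-1$; hence $r(\ell D)/\ell\to 0$. If $\deg(D)=0$, then $\deg(\ell D)=0\ge -1$, so \autoref{rem: rank_negative_degree} (via \autoref{rem: prop_trop_linear_series}) gives $-1\le r(\ell D)\le \deg(\ell D)=0$. Dividing by $\ell$ and taking $\limsup$ then yields $\vol_C^{\BT}(D)=0=\max\{\deg(D),0\}$.

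\emph{Case $\deg(D)>0$.} For every $\ell\ge 1$ we have $\deg(\ell D)\ge -1$, so again $r(\ell D)\le \deg(\ell D)=\ell\deg(D)$. On the other hand, \autoref{lemma: lower_bound_trop_curves} gives a constant $c_C$, depending only on $C$ and not on the divisor, such that $r(\ell D)\ge \ell\deg(D)-c_C$. Therefore
\[
\deg(D)\ \ge\ \frac{r(\ell D)}{\ell}\ \ge\ \deg(D)-\frac{c_C}{\ell}
\]
for all $\ell>0$. Letting $\ell\to\infty$ shows that the limit exists (not merely the $\limsup$) and equals $\deg(D)$.

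The only substantive input is \autoref{lemma: lower_bound_trop_curves}, which is already established. The one point needing care is that $c_C$ is uniform in $D$, so it can be applied to each $\ell D$ with the same constant; this is explicit from its definition $c_C=2(\#\mcV(C)+1)(\#\mcE(C)+1)$. I therefore expect no real obstacle beyond checking that the upper bound $r\le\deg$ from \autoref{rem: rank_negative_degree} holds verbatim on $C$, which is exactly what \autoref{rem: prop_trop_linear_series} asserts.
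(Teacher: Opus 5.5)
Your proposal is correct and is essentially the paper's proof: the case $\deg(D)\le 0$ uses the bound $-1\le r(\ell D)\le 0$ transported from \autoref{rem: rank_negative_degree} via \autoref{rem: prop_trop_linear_series}, and the case $\deg(D)>0$ squeezes $r(\ell D)/\ell$ between $\deg(D)$ and $\deg(D)-c_C/\ell$ using the uniform constant of \autoref{lemma: lower_bound_trop_curves}. Treating $\deg(D)<0$ and $\deg(D)=0$ separately, and noting that the limit actually exists, are harmless refinements of that same argument.
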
 

\begin{proof}
    If $\deg(D)\leq0$, then by \autoref{rem: prop_trop_linear_series} we have $-1\leq r(\ell D) \leq 0$ for all $\ell\in\BN$. Thus, it follows that  $\vol_C^{\BT}(D)=0$.
    
    By \autoref{lemma: lower_bound_trop_curves} it follows that $r(\ell D)\ge \ell\deg(D) - c_C$, for all $\ell\in\BN$. Now, if $\deg(D)>0$, then by \autoref{rem: prop_trop_linear_series} we have $\deg(\ell D)\ge r(\ell D)$ for all $\ell\in\BN$. Thus, 
    \[
        \frac{\ell\deg(D)}{\ell} \ge \frac{r(\ell D)}{\ell}\ge \frac{\ell\deg(D)-c_C}{\ell}
    \]
    for all $\ell>0$.  We are done by taking  $\limsup$ with respect to  $ \ell\to\infty$.
\end{proof}

\section{Asymptotics of the independence rank of linear series on tropical curves}\label{Sec: JP rank}

We recall the \emph{independence rank}, a notion arising from tropical linear algebra, and compare it with the Baker–Norine rank. We study its algebraic and combinatorial properties, emphasizing key differences such as the failure of subadditivity. Despite these differences, we prove that both ranks have the same asymptotic behavior. This leads to a unified notion of tropical volume and an asymptotic Riemann--Roch theorem.

\medskip

The \emph{tropical semiring} is the triple $(\BT,\oplus,\odot)$, where $\BT\coloneqq \R\cup\{\infty\}$, 
    \[
        \begin{tikzcd}[row sep=0pt,/tikz/column 1/.append style={anchor=base east}
     ,/tikz/column 2/.append style={anchor=base west}]
            \oplus:\BT\times\BT \arrow[r] & \BT \\
            (a,b) \arrow[r, mapsto] & \min\{a,b\}
        \end{tikzcd}
        \quad \text{ and } \quad
        \begin{tikzcd}[row sep=0pt, /tikz/column 1/.append style={anchor=base east}
     ,/tikz/column 2/.append style={anchor=base west}]
            \odot:\BT\times\BT \arrow[r] & \BT \\
            (a,b) \arrow[r, mapsto] & a+b
        \end{tikzcd}
    \]
    are called \emph{tropical addition} and \emph{tropical multiplication}, respectively. We assume $a\oplus\infty=a$ and $a\odot\infty=\infty$ for all $a\in\BT$. We refer the reader to \cite[\S\;1.2]{giansiracusa2026} for more details concerning the tropical semiring $\BT$ and its different models. 

A \emph{$\BT$-module} (or \emph{tropical module}) is an abelian monoid $(M,\oplus)$ with identity $0_{M}$, and a map $\BT\times M \to M$ with $(t,m) \mapsto t m$
satisfying the defining axioms of a module over a ring, but with the tropical operations, i.e.\ for all $t, t'\in\BT$ and all $m,m'\in M$:
\begin{enumerate}[label = (TM\arabic*)]
    \item\label{axiom: trop_mod1} $(t\odot t') m = t(t' m)$;
    \item\label{axiom: trop_mod2} $t(m\oplus m')=tm\oplus tm'$, and  $(t\oplus t')m=tm\oplus t'm$;
    \item $0 m = m$ and $t 0_M = 0_M = \infty m$.
\end{enumerate}

See \cite[pg.\ 101]{Golan_2003} and \cite[pg.\ 1330]{Sumi_2021} for more details about $\BT$-modules. For instance, the set of tuples $\BT^n$ with componentwise tropical operations is a $\BT$-module. Let $S\subseteq M$ be a subset. We write $\langle S \rangle$ to represent the smallest $\BT$-submodule of $M$ that contains $S$.

Let $\PL(\Gamma)$ denote the group of continuous piecewise-linear functions with integer slopes on the compact tropical curve $\Gamma$.
Given functions $\varphi_1,\dots,\varphi_r$ in $\PL(\Gamma)$, the function $\min_{i}(\varphi_i)\in\PL(\Gamma)$ is defined as $\min_{i}(\varphi_i)(x)=\min_{i}\!\big(\varphi_i(x)\big)$.

\begin{defin}
 A tuple $(\varphi_1, \dots,\varphi_r)$ of elements in $\PL(\Gamma)$ is \emph{tropically dependent}\  if there is a tropical linear combination 
\[
    \bigoplus_{i=1}^r a_i\odot\varphi_i 
    =\min_{i}(a_i+\varphi_i) 
    \quad \text{ where } \quad (a_1,\dots,a_r)\in\BT^{r}\smallsetminus \{(\infty,\dots,\infty)\},
\]
such that the minimum is achieved at least twice at every point $x \in \Gamma$. If $(\varphi_0,\dots,\varphi_r)$ is not tropically dependent, we say that it is \emph{tropically independent}. Given a $\BT$-module $\Sigma$, its \emph{independence rank} $r_{\ind}(\Sigma)$ is the cardinality of the largest tropically independent subset of $\Sigma$. 
\end{defin}
 In other words, $(\varphi_0, \dots,\varphi_r)$ is tropically dependent if and only if 
\[
    \bigoplus_{i=0}^r a_i\odot\varphi_i 
    =\bigoplus_{i\neq j} a_i\odot\varphi_i \quad \text{ for each } \quad j=0,\dots,r. 
\]

For $\varphi\in\PL(\Gamma)$, its divisor $\ddiv(\varphi)$ is defined as in \cite[pg.\ 212]{Mikhalkin_Zharkov_2008}.
Let $D\in\Div(\Gamma)$. We define the following $\BT$-module 
\[
    R(D) \coloneqq \{\varphi \in \PL(\Gamma) \mid D + \ddiv(\varphi) \ge 0\}.
\]

\begin{defin}[Baker--Norine rank of a tropical submodule]
Let $\Sigma \subseteq R(D)$ be a $\BT$-submodule. Its \emph{Baker--Norine rank} $r_{\rBN}(\Sigma)$ is the largest $r\in\BN$ such that, for every $E\in\Div_{+}^{r}(\Gamma)$, there exists $\varphi \in \Sigma$ such that $D - E + \ddiv(\varphi) \geq 0$. If such maximal $r$ does not exist, then $r_{\rBN}(\Sigma)\coloneqq -1$.
\end{defin}
 \begin{rem}\label{rem: BN_rank_mod =BN+rank_div}
     Note that if $\Sigma = R(D)$, then we recover \autoref{def: complete linear series for metric graphs}. We will call $\BT$-submodules $\Sigma \subseteq R(D)$ also \emph{linear series}. 
 \end{rem}

The next Proposition proves a similar result to \cite[Prop.\ 8.3]{trop_linear_series_matroids_25}. The proof is similar as well, which we include for completeness. We emphasize that our result holds for any $\BT$-submodule of $R(D)$.

\begin{prop}\label{equ: Inequality rk_ind and rk_BN}
Let $\Sigma\subseteq R(D)$ be a $\BT$-submodule, then $r_{\ind}(\Sigma)\ge r_{\rBN}(\Sigma)+1$.
\end{prop}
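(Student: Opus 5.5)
We prove the statement by induction on $r\coloneqq r_{\rBN}(\Sigma)$, following the strategy of \cite[Prop.\ 8.3]{trop_linear_series_matroids_25}. Concretely, for every $r$ we show that if every $E\in\Div_{+}^{r}(\Gamma)$ admits some $\varphi\in\Sigma$ with $D-E+\ddiv(\varphi)\ge 0$, then $\Sigma$ contains $r+1$ tropically independent functions.

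If $r_{\rBN}(\Sigma)=-1$ there is nothing to prove. If $r=0$, then $\Sigma\neq\{\infty\}$, and any single finite $\varphi\in\Sigma$ is tropically independent, since the minimum of one function cannot be attained twice.

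For the inductive step we use the standard characterization of independence. A tuple $(\varphi_0,\dots,\varphi_r)$ is tropically independent as soon as there are points $x_0,\dots,x_r\in\Gamma$ such that, for suitable coefficients $b_i$, the function $\theta=\min_i(b_i+\varphi_i)$ is attained at $x_j$ uniquely by the $j$-th term. The triangular form of this criterion is not quite enough; to conclude we need the "permutation" form: given any coefficients $a_i$, the minimum of $\min_i(a_i+\varphi_i)$ at some point is attained only once. We therefore choose the points and functions so that the Jensen--Payne criterion applies.

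\begin{enumerate}
\item Pick $r+1$ general points $x_0,\dots,x_r$ in the interior of edges.
\item For each $j$, use the rank hypothesis with $E_j=\sum_{i\neq j}x_i$ (degree $r$) to obtain $\varphi_j\in\Sigma$ with $D-E_j+\ddiv(\varphi_j)\ge 0$.
\item Apply the rank hypothesis again, now to divisors of the form $E_j+(\text{extra points near } x_j)$, and combine via tropical sums (which stay in $\Sigma$ because it is a $\BT$-submodule). This arranges that $\varphi_j$ is "vanishing" at every $x_i$ with $i\ne j$ but not at $x_j$.
\end{enumerate}

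Here "vanishing" means that $D+\ddiv(\varphi_j)$ has positive coefficient at $x_i$, which forces $\varphi_j$ to bend (to have a local minimum behaviour) at $x_i$.

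Suppose now that $\theta=\min_i(a_i+\varphi_i)$ has its minimum attained at least twice everywhere. Consider the effective divisor $D+\ddiv(\theta)$ and examine its value at $x_j$. The terms attaining the minimum near $x_j$ must include some $\varphi_i$ with $i\ne j$ that bends at $x_j$, while $\varphi_j$ does not bend there. A slope comparison, using $\ddiv(\min)\ge$ the min of the divisors on the region where each term is minimal, then gives a contradiction for some $j$. Choosing $j$ so that $a_j+\varphi_j(x_j)$ is extremal among the relevant values makes this contradiction appear.

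The main obstacle is step 3 together with the final slope argument. We must ensure that some $\varphi_j$ is genuinely non-degenerate at $x_j$ (that it is not forced to bend there), and this uses only the submodule closure under $\oplus$ and tropical scalars, not the full $R(D)$. If the direct construction fails, the first thing to try is the approach in the cited proof: choose a point $x$ and a function $\psi\in\Sigma$ with $D+\ddiv(\psi)$ not containing $x$, and apply induction to the submodule $\{\varphi\in\Sigma\mid D+\ddiv(\varphi)\ge x\}$, shifted appropriately. This submodule has BN-rank at least $r-1$, so by induction it contains $r$ independent functions. Adjoining $\psi$ preserves independence, because at $x$ the function $\psi$ behaves differently from all the others, namely it is the unique non-bending one.
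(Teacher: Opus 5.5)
Neither of your two routes is complete.

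\textbf{The main route.} The decisive step is never carried out. You do not say what contradiction the ``slope comparison'' produces, and the vanishing pattern from step 3 does not give one by itself. Suppose $\theta=\min_i(a_i+\varphi_i)$ is a dependence. At each $x_j$ some $i\neq j$ attains the minimum, and $\ddiv(\theta)(p)\ge\ddiv(\varphi_k)(p)$ for every $k$ attaining the minimum at $p$. So what you obtain is $D+\ddiv(\theta)\ge x_0+\cdots+x_r$, which is no contradiction once $\deg D\ge r+1$. Step 3 also has no mechanism forcing $\varphi_j$ not to vanish at $x_j$; the rank hypothesis only ever produces vanishing. Incidentally, the first criterion you quote is already sufficient. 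Normalise $b_i=0$ and pick $j$ with $a_j$ minimal; then the $j$-th term attains $\min_i(a_i+\varphi_i)$ uniquely at $x_j$. The trouble is that nothing in your construction produces such points.

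\textbf{The fallback.} The setup is correct: $\Sigma_x=\{\varphi\in\Sigma\mid D+\ddiv(\varphi)\ge x\}$ is a submodule of $R(D-x)$ of BN-rank at least $r-1$. The last step is false, however. Adjoining to $r$ independent elements of $\Sigma_x$ a $\psi$ that does not vanish at $x$ need not give an independent set. Take $\Gamma=[0,3]$ with endpoints $v_1$ ($t=0$) and $v_2$ ($t=3$), $D=2v_1$ and $\Sigma=R(D)$, so $r_{\rBN}(\Sigma)=2$. Let $x$ be the point $t=\tfrac32$, $p$ the point $t=2$, and
\[
\varphi_1=2\min(t,\tfrac32),\qquad \varphi_2=\min(t,\tfrac32),\qquad \psi=2\min(t,2).
\]
Then $D+\ddiv(\varphi_1)=2x$, $D+\ddiv(\varphi_2)=v_1+x$ and $D+\ddiv(\psi)=2p$. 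So $\varphi_1,\varphi_2\in\Sigma_x$, they are independent (they do not differ by a constant), and $\psi$ is the only one of the three not bending at $x$. Yet $\min\{\psi,\varphi_1,\tfrac32+\varphi_2\}$ attains its minimum twice everywhere. On $[0,\tfrac32]$ it equals $2t$, attained by $\psi$ and $\varphi_1$. On $[\tfrac32,3]$ it equals $3$, attained by $\varphi_1$ and $\tfrac32+\varphi_2$.

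Tropical dependence is a global condition, so it cannot be ruled out by how one function behaves at one point. What you need is a local certificate of independence, and the paper uses exactly that, with no induction. It picks a small interval $I$ disjoint from $\Supp(D)$ on which the set of slopes of elements of $\Sigma$, in a fixed direction, is the same at every point. It places all $r$ points of $E$ inside $I$. The $\varphi\in\Sigma$ with $D-E+\ddiv(\varphi)\ge 0$ must bend at each of them, so it takes $r+1$ distinct slopes on $I$, all realised at a single point of $I$ by elements of $\Sigma$. Finally, functions with pairwise distinct slopes at a point in a given direction are independent. On a short segment they are affine with distinct slopes, so their minimum is attained only once at all but finitely many points.
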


The geometric notions used in the next proof are easily understood if one makes a parallel with a real smooth manifold of dimension $1$, or more specifically, identify a small interval $I\subset \Gamma$ with an interval in the real line. For formal definitions, we refer the reader to \cite[\S\ 2]{trop_linear_series_matroids_25}.

\begin{proof}    
    Let $\zeta$ be a tangent vector on $\Gamma$ at $v$, and 
    $
        s_{v,\zeta}(\Sigma)\coloneqq \{ {\partial\varphi}/{\partial\zeta}(v) \in\BZ \mid \varphi\in\Sigma\} 
    $
    the set of slopes at $v$ along $\zeta$.  
    Let $n_{v,\zeta}\coloneqq\#s_{v,\zeta}(\Sigma)$, and fix $\varphi_{v,\zeta,1},\dots,\varphi_{v,\zeta,n_{v,\zeta}} \in  \Sigma$ that realize the slopes in $s_{v,\zeta}(\Sigma)$.
    With abuse of notation, let $I_{v,\zeta}\coloneqq [v,w_\zeta)$, with a fixed orientation induced by $\zeta$, and $I_{v,\zeta}\cap\Supp(D)=\varnothing$ be such that $\varphi_{v,\zeta,i}|_{I_{v,\zeta}}$ has constant slope for $i=1,\dots,n_{v,\zeta}$. 
    Thus, for any $u\in I_{v,\zeta}$, it follows that $s_{u,\eta}(\Sigma)\supseteq s_{v,\zeta}(\Sigma)$, for a tangent vector $\eta$ at $u$ with the same direction as $\zeta$. 
    Choose $u\in I_{v,\zeta}$ with maximal $n_{u,\eta}$, and fix $I_{u,\eta}\coloneqq[u,w_\eta)\subseteq I_{v,\zeta}$.
    Then, for any $u'\in I_{u,\eta}$ it follows that $s_{u',\eta'}(\Sigma)=s_{u,\eta}(\Sigma)$. Let $r\coloneqq r_{\rBN}(\Sigma)$, and $E \coloneqq x_1 + \cdots + x_r \in \Div(\Gamma)$ where the $x_i$ are pairwise distinct points in $(u,w_{\eta})$. 
    Thus, there exists $\varphi\in\Sigma$ such that $D+\ddiv(\varphi)\ge E$. As $\varphi|_{I_{u,\eta}}$ has at least $r+1$ distinct slopes, it follows that $n_{u,\eta}\ge r+1$. 
    
    Since any set of functions in $\Sigma$ with pairwise different slopes at $u$, along $\eta$, is tropically linearly independent, it follows that $r_{\ind}(\Sigma)\ge n_{u,\eta} \ge r_{\rBN}(\Sigma) +1$.
\end{proof}

\begin{rem}\label{def: Trop_Lin_Ser_JP}
    Finitely generated $\mathbb{T}$-submodules $\Sigma \subseteq R(D)$ for which the inequality in \autoref{equ: Inequality rk_ind and rk_BN} is actually an equality have been studied in the literature and are known as \emph{tropical linear series} (TLS); see \cite[Def.\ 1.4]{trop_linear_series_matroids_25}. Their geometry has been related to the combinatorial geometry of matroids. However, in general, complete linear series are not tropical linear series; see \cite[\S\ 1.4]{Jensen_Payne_2022} and \autoref{R(D) is not a TLS}. One of the goals of this work is to prove that, nevertheless, they satisfy an symptotic Riemann--Roch theorem; see \autoref{thm:asymptotic-rr}. Moreover, in \autoref{exa: R-R does not holt for r_ind} we show that even if a complete linear series is tropical, it may fail to satisfy a Riemann--Roch theorem with respect to the independence rank. We emphasize that most of our results are for a general $\BT$-submodule $\Sigma\subseteq R(D)$, and do not rely on the condition $r_{\ind}(\Sigma)=r_{\rBN}(\Sigma)+1$ nor on the finite generation hypothesis. 
\end{rem}

Let $\Sigma\subseteq R(D)$ be a $\BT$-submodule and $\BG_{\BT}\coloneq\BT\smallsetminus\{\infty\}$. The  \emph{tropical projectivization} $|\Sigma|$ of $\Sigma$ is the quotient $(\Sigma - \{0_M\})/\hspace{-3pt}\sim$ where $\varphi\sim \varphi'$ if and only if there exists $t\in\BG_{\BT}$ such that $\varphi =t \odot\varphi'$.
If $\Sigma$ is  finitely generated, then by \cite[Prop.\ 3.4]{trop_linear_series_matroids_25} it follows that $|\Sigma|$ is a closed polyhedral subset of $\Sym^d(\Gamma)$, where $d=\deg(D)$. 
We show, using an example by D.~Jensen and S.~Payne, that if $\Sigma\subset R(D)$ is not finitely generated, then $|\Sigma|$ need not be closed.

\begin{example}[$|\Sigma|$ may fail to be closed]\label{exa: trop_edge}
        Let $\Gamma$ be the compact tropical curve as in \autoref{fig: one edge curve}.         
        \begin{figure}[ht!]
            \centering
            \begin{tikzpicture}
                \pgfmathsetmacro{\a}{0}
                \pgfmathsetmacro{\b}{1}
                \node[] at (-1,.08) {$\Gamma:$};
                \filldraw[black] (\a,0) circle (2pt) node[xshift=-9pt] {$v_1$};
                \filldraw[black] (\b,0) circle (2pt) node[xshift=9pt] {$v_2$};
                \draw[] (\a,0) -- (\b,0);
            \end{tikzpicture}
            \caption{Compact tropical curve that consists of one edge.}
            \label{fig: one edge curve}
        \end{figure}
        In general, the $\BT$-module $R(D)$ is finitely generated; see \cite[Cor.\ 9]{Haase_Musiker_2012}.  
        Now, let $D=v_1\in\Div(\Gamma)$, in this case $R(D)=\langle \varphi_0,\varphi_1\rangle$ where $\varphi_0=0$ and $\varphi_1(x) = x$. As $r_{\ind}(D)=2$ and $r_{\rBN}(D)=1$, it follows that $R(D)$ is a tropical linear series.
        
        Let $\Sigma\subseteq R(D)$ be the following $\BT$-submodule 
        \[
            \Sigma\coloneqq\{\varphi \in R(D)\mid \varphi(v_2)-\varphi(v_1)<1\} \cup \{\infty\}.
        \]
        Since there does not exist $\varphi\in\Sigma$ such that $D+\ddiv(\varphi)\ge v_2$, it follows that $r_{\rBN}(\Sigma)=0$. 
        As $\varphi_0$ and $\frac{1}{2}\odot\varphi_0\oplus \varphi_1 \in \Sigma$ are tropically independent, we have $r_{\ind}(\Sigma)> r_{\rBN}(\Sigma)+1$, in particular,  $\Sigma$ is not a tropical linear series.
        Moreover, $\Sigma$ is not a finitely generated submodule of $R(D)$, and we can identify $|R(D)|$ with $\Gamma$, and $|\Sigma|=\Gamma\smallsetminus\{v_2\}$ which is not closed in $|R(D)|$.
    \end{example}

\begin{example}[$R(D)$ may fail to be a tropical linear series, {\cite[Exa.\ 8.1]{trop_linear_series_matroids_25}}]\label{R(D) is not a TLS}
    Let $\Gamma$ be the compact tropical curve as in \autoref{fig: barbel graph}

    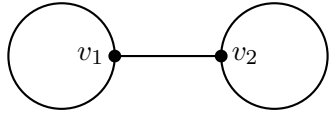
\begin{figure}[ht!]
        \centering
        \begin{tikzpicture}[thick]
            \pgfmathsetmacro{\a}{.7}
            \draw (0,0) circle (\a);
            \draw (4*\a,0) circle (\a);
            \draw (\a,0) -- (3*\a,0);
    
            \filldraw[black] (\a,0) circle (2pt) node[xshift=-9pt] {$v_1$};
            \filldraw[black] (3*\a,0) circle (2pt) node[xshift=9pt] {$v_2$};
        \end{tikzpicture}
        \caption{The compact tropical curve $\Gamma$.}
        \label{fig: barbel graph}
    \end{figure}

    Let $K_{\Gamma}= v_1  + v_2$ be its canonical divisor. Directly from the definition it follows that $r_{\rBN}(K_{\Gamma})=1$. On the other hand, let $\varphi_1, \varphi_2, \varphi_3 \in \PL(\Gamma)$ be such that 
    each $\varphi_i$ is constant equal to $\varphi_{i}(v_j)$ in each loop, and $\varphi_i$ has constant slope equal to $i-2$ in the edge, for each $i=1,2,3$. Thus, it is clear that $\varphi_i\in R(K_\Gamma)$ and also that $\varphi_1,\varphi_2$ and $\varphi_3$ are tropically linearly independent, that is, $r_{\ind}(K_\Gamma)\ge 3$. As $r_{\ind}(K_\Gamma) > r_{\rBN}(K_\Gamma)+1$ we conclude that $R(K_\Gamma)$ is not a tropical linear series.    
\end{example}

    Let $D$ be a divisor on a tropical curve. Directly from the definition of the independence rank, we see that if $\Sigma\subseteq\Sigma'$ are submodules of $R(D)$, then $r_{\ind}(\Sigma) \le r_{\ind}(\Sigma')$. In particular, since $r_{\ind}(D)$ is finite, see \autoref{thm: upper bound}, it follows that $r_{\ind}(\Sigma)$ is finite for any submodule $\Sigma$ of $R(D)$.

    For the sake of completeness, we now present an example by D.~Jensen and S.~Payne,
    where $\Sigma$ is not finitely generated. 

\begin{example}[$\Sigma$ may fail to be finitely generated]
     Let $\Gamma$ be the tropical curve that consists of two edges, of length $1$ each, and three vertices, that we identify with the interval $[-1,1]\subset \BR$; see \autoref{fig: graph 3_v_2e}.
    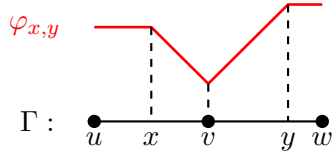
\begin{figure}[ht!]
        \centering
        \begin{tikzpicture}[line width=1pt]
        \pgfmathsetmacro{\a}{1.5}
        \pgfmathsetmacro{\b}{.5}
    
    \coordinate (v1) at (\a,0);
    \coordinate (v2) at (2*\a,0);
    \coordinate (v3) at (3*\a,0);
    
    \draw[thick, shorten <=0pt, shorten >=0pt] (v1) -- (v2) -- (v3);

    \draw[red] (\a, .5*\a + \b) -- (1.5*\a,.5*\a + \b) -- (2*\a,\b) -- (2.7*\a,.7*\a + \b) -- (3*\a,.7*\a + \b);

    \draw[dashed, thick] (1.5*\a,.5*\a + \b) -- (1.5*\a,0) node[below] {$x$};
    \draw[dashed, thick] (2*\a,\b) -- (v2);
    \draw[dashed, thick] (2.7*\a,.7*\a + \b) -- (2.7*\a,0) node[below] {$y$};

    \node[] (g) at (.5*\a,0) {$\Gamma:$};
    \node[left, red] (phi) at (.8*\a,.5*\a + \b) {$\varphi_{x,y}$};
    
    \filldraw[] (v1) circle (2pt) node[below] {$u$};
    \filldraw[] (v2) circle (2pt) node[below] {$v$};
    \filldraw[] (v3) circle (2pt) node[below] {$w$};
    
\end{tikzpicture}
        \caption{The curve $\Gamma$ and the functions $\varphi_{x,y}$.}
        \label{fig: graph 3_v_2e}
    \end{figure}
    
Let $\varphi_{x,y}$ be a function with slope $1$ from $v\coloneqq0$ to $x$ and from $v$ to $y$, as in \autoref{fig: graph 3_v_2e}.
Let $D= 2v$, and 
\[
    \Sigma\coloneqq \{ \varphi_{x,y} \in R(D) \mid y-x\le1\}
\]
be a submodule of $R(D)$.

Note that $r_{\rBN}(D)=2$ and $r_{\ind}(D)=3$, that is, $R(D)$ is a tropical linear series. More specifically, as $\Sigma\subseteq R(D)$ it follows that $r_{\ind}(\Sigma)\le 3$, and as $\varphi_{0,1},\, \varphi_{\frac{1}{2},\frac{1}{2}}$, and $\varphi_{1,0} \in \Sigma$ are tropically independent, we have $r_{\ind}(\Sigma)=3$. In contrast, $r_{\rBN}(\Sigma)=1$, that is, there exists no $\varphi_{x,y}\in\Sigma$ such that $D+\ddiv(\varphi_{x,y})\ge 2 u$. This shows that $\Sigma$ is not a tropical linear series. 

Now we argue that $\Sigma$ is not finitely generated. Let $I=[1,n]\subset\BN$ and suppose that there are  $\varphi_i\coloneqq \varphi_{x_i,y_i}$, for some $x_i, y_i$ and $i\in I$ such that $\Sigma=\langle \varphi_i \mid i\in I\rangle$. Then, for any $\varphi_{x,y}\in\Sigma$ there exist $a_i\in \BR$ such that $\varphi_{x,y}=\bigoplus_{i\in I} a_i\odot\varphi_i$. In this case, $x\ge x_i$ and $y\le y_i$ for all $i\in I$. In particular, this shows that not all functions $\varphi_{x,y}\in\Sigma$ with $x+y=1$ can be expressed as a tropical linear combination of the $\varphi_i$. This shows that $\Sigma$ is not finitely generated.
\end{example}

We note that the independence rank in general does not satisfy an inequality along the lines of Inequality \eqref{ineq: dim|D|_is_convex}. Indeed, consider the compact tropical curve $\Gamma$ that consists of only one edge, an analogous, but easier case as in \autoref{fig: graph_Gamma_tree}. In particular, since $\Gamma$ is a tree, the BN-rank depends only on the degree, thus $\Pic(\Gamma)=\Z$. For a fixed $d\ge 1$, let $D\in\Div^{d}(\Gamma)$. Thus, for $\ell \in \BN$
    \begin{align*}
        r_{\ind}(\ell D) & = r_{\rBN}(\ell D) + 1 = r_{\rBN}(\ell'D) + r_{\rBN}(\ell'' D) +1 < r_{\ind}(\ell'D) + r_{\ind}(\ell'' D),
    \end{align*}
    for any non-negative integers $\ell',\ell''$ with $\ell=\ell'+\ell''$. This shows that $r_{\ind}$ does not satisfy an analog of the Inequality \eqref{ineq: dim|D|_is_convex}.

\begin{prop}\label{thm: upper bound}
        Let $D\in\Div(\Gamma)$ with $\deg(D)\ge - 1$ and $f_1,\dots,f_m\in R(D)$ tropically independent. Then, $\deg(D)+1 \ge m$. In particular, $\deg(D) + 1 \ge r_{\ind}(\Sigma)$.
\end{prop}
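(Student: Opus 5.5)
The plan is to argue by contradiction: assume $f_1,\dots,f_m\in R(D)$ are tropically independent with $m\ge \deg(D)+2$. From a suitable tropical linear combination of the $f_i$ I will build an effective divisor linearly equivalent to $D$ whose degree is at least $m-1>\deg(D)$. That is impossible, since linear equivalence preserves degree.

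First the degenerate case. If $\deg(D)=-1$, then $R(D)=\varnothing$ by \autoref{rem: prop_trop_linear_series}, so $m=0\le \deg(D)+1$. Hence assume $d\coloneqq\deg(D)\ge 0$.

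\emph{Step 1: choosing coefficients.} I want $a_1,\dots,a_m\in\R$ such that, for $\theta\coloneqq\min_i(a_i+f_i)$, every index $i$ is the \emph{unique} minimizer on some nonempty open set $U_i\subseteq\Gamma$. I would start with all $a_i=0$. Suppose some index $j$ never attains the minimum alone. Then I adjust coefficients, lowering $a_j$ or raising the others. I expect to use the standard fact that the coefficients can be perturbed in this way until every index is a strict minimizer somewhere (this is the lemma of Jensen--Payne on tropical independence). The idea behind it is the following. Take a minimal subset of indices that is "never alone". Independence forbids the minimum being attained twice everywhere, so some index can be made strictly smaller somewhere, and a finiteness/compactness argument over the finitely many linear pieces then terminates. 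This is the step I expect to be the main obstacle. I would try to prove it directly by induction on $m$: add $f_m$ with coefficient $a_m=\min_x\big(\theta_{m-1}(x)-f_m(x)\big)-\varepsilon$ for a suitable choice that keeps the earlier regions $U_1,\dots,U_{m-1}$ nonempty. If that fails, I would fall back to reordering the indices.

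\emph{Step 2: $\theta$ is a section.} Since $R(D)$ is a $\BT$-module, $\theta\in R(D)$, so $D_\theta\coloneqq D+\ddiv(\theta)\ge 0$ and $\deg(D_\theta)=d$. I will use the convention $\ord_x(\ddiv\varphi)=-\sum_\zeta \partial\varphi/\partial\zeta(x)$, with the sum over outgoing tangent directions $\zeta$ at $x$. At any point $x$ with $\theta(x)=a_i+f_i(x)$, every outgoing slope of $\theta$ is at most the corresponding slope of $f_i$. Therefore
\[
  \ord_x(D_\theta)\ \ge\ \ord_x\big(D+\ddiv(f_i)\big)\ \ge\ 0 ,
\]
and the first inequality is strict whenever, in some direction at $x$, the minimizer switches from $i$ to an index with strictly smaller slope.

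\emph{Step 3: counting chips.} Let $\overline{U_i}$ be the closed set where index $i$ attains the minimum alone or in a tie. The sets $U_1,\dots,U_m$ are pairwise disjoint, nonempty and open, and $\Gamma$ is connected. Consider the "adjacency" structure in which $i$ and $j$ are linked when some path passes from $U_i$ to $U_j$ through the tie locus. Following paths in $\Gamma$ and using connectivity, I would extract at least $m-1$ distinct switching points, one for each edge of a spanning tree of this structure. At each such point the strict inequality of Step 2 contributes at least one extra chip of $D_\theta$ beyond the nonnegative contribution $D+\ddiv(f_i)$.

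The remaining task is to check that these extra chips are not double-counted: two switches occurring at the same point, or across a tie interval, must still produce distinct unit contributions. Granting this, $d=\deg(D_\theta)\ge m-1$, so $m\le d+1$. Finally, any tropically independent subset of $\Sigma\subseteq R(D)$ lies in $R(D)$, which gives $r_{\ind}(\Sigma)\le \deg(D)+1$.
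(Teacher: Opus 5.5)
\textbf{Verdict: the proposal has genuine gaps.} Your route differs from the paper's, and the two steps that carry the content are not established. The paper uses a short dimension count. $\Sigma=\langle f_1,\dots,f_m\rangle$ is finitely generated and $|\Sigma|$ embeds in $\Sym^d(\Gamma)$, so $\dim|\Sigma|\le d$; then $r_{\ind}(\Sigma)=\dim|\Sigma|+1$ by Prop.~3.6 of \cite{trop_linear_series_matroids_25}.

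\textbf{Step~1 is the main gap.} The easy, standard implication is the reverse one. If some coefficient vector $a$ makes every $a_i+f_i$ the unique minimizer somewhere, then the $f_i$ are independent: given any $a'$, take $i$ minimizing $a'_i-a_i$, and the point where $i$ is uniquely minimal for $a$ remains such a point for $a'$. You need the converse, and your sketch does not give it.
\begin{itemize}
\item Your choice $a_m=\min_x(\theta_{m-1}-f_m)-\varepsilon$ gives $a_m+f_m<\theta_{m-1}$ on all of $\Gamma$, so every $U_i$ disappears.
\item Even the corrected choice, $a_m$ just below $\max_x(\theta_{m-1}-f_m)$, puts the new region at the argmax set, which can contain a whole $\overline{U_i}$. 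Take $\Gamma=[0,3]$, $D$ the sum of the points $0$ and $2$, $f_1=0$, $f_2=x$, $a=(0,-2)$ and $f_3=\max\{2-x,0\}\in R(D)$. Then \emph{no} $a_3$ works unless you also change $a_1,a_2$: if $a_3<0$, $f_1$ is never uniquely minimal; if $a_3\ge 0$, $f_3$ never is. Yet $(-\tfrac12,-1,-1)$ works for all three.
\end{itemize}
So a real argument is needed. One that works:
\begin{enumerate}
\item The space of coefficient vectors in $\BT^m\smallsetminus\{(\infty,\dots,\infty)\}$ modulo constants is compact, and ``unique minimum at $x$'' is an open condition. So independence is already witnessed on finitely many points of $\Gamma$.
\item By tropical linear algebra, $m$ tropically independent columns contain a tropically nonsingular $m\times m$ minor.
\item Strict complementary slackness for the unique optimal assignment of that minor yields the coefficients.
\end{enumerate}
This means your approach does not avoid tropical linear algebra; it moves it into Step~1.

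\textbf{Step~3 is only granted, and with arbitrary coefficients the one-chip-per-spanning-tree-edge count fails locally.} Let $x$ be an interior point of an edge with $x\notin\Supp(D)$ and local coordinate $t$. Suppose near $x$ that $a_1+f_1=t$, $a_2+f_2=a_3+f_3=0$ and $a_4+f_4=-t$. All four indices meet at $x$, so a spanning tree may use three edges there. But $\theta=-|t|$ gives $\operatorname{ord}_x(D_\theta)=2$.

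The fix is to choose $a$ generically:
\begin{itemize}
\item The set of good coefficient vectors is open in $\R^m$. So you may also avoid the finitely many hyperplanes $a_j-a_i=c$, with $c$ a value of $f_i-f_j$ at a vertex or a breakpoint of $f_i-f_j$. You may likewise avoid the measure-zero set of triple ties.
\item Then ties occur only at a finite set $P$ of edge-interior points. Each is a transversal crossing of exactly two indices, so each carries at least one chip of $D_\theta$ by your Step~2.
\item On each component of $\Gamma\smallsetminus P$ the minimizer is a single index. There are at least $m$ components, because every $U_i$ is nonempty and lies in $\Gamma\smallsetminus P$.
\item Each point of $P$ joins two distinct components, and $\Gamma$ is connected. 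Hence $d=\deg(D_\theta)\ge\#P\ge m-1$.
\end{itemize}
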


\begin{proof}
    Let $\Sigma=\langle f_1,\dots,f_m\rangle \subseteq R(D)$ be a $\BT$-submodule, and $|\Sigma|$ its projectivisation. As $|\Sigma|\subseteq \Sym^d(\Gamma)$ with $d=\deg(D)$, it follows that $\dim|\Sigma|\le d$. On the other hand, by \cite[Prop.\ 3.6]{trop_linear_series_matroids_25} we have $r_{\ind}(\Sigma)=\dim|\Sigma|+1$. Hence 
    \[
        r_{\ind}(\Sigma) = \dim|\Sigma
        |+1\le \deg(D)+1.
    \]
    As the $f_i$ are tropically independent, it follows that 
    $m\le r_{\ind}(\Sigma)\le \deg(D)+1$.
\end{proof}

Given linear series (i.e.\ $\BT$-submodules) $\Sigma_1 \subseteq R(D_1)$ and $\Sigma_2 \subseteq R(D_2)$, define the multiplication map 
\[
    \begin{tikzcd}[row sep=0pt,/tikz/column 1/.append style={anchor=base east}
     ,/tikz/column 2/.append style={anchor=base west}]
        \mu \colon \Sigma_1 \times \Sigma_2 \arrow[r]  & R(D_1 + D_2), \quad 
         (f_1,f_2) \arrow[r,mapsto]  & \mu (f_1 , f_2)\coloneqq f_1 \odot f_2.
    \end{tikzcd}
\]
We use $\Sigma_1\odot\Sigma_2$ to denote the $\BT$-submodule generated by $\mu(\Sigma_1\times\Sigma_2)$, that is 
\[
    \Sigma_1\odot\Sigma_2\coloneqq \langle \mu(\Sigma_1\times\Sigma_2)\rangle\subseteq R(D_1+D_2).
\]
For any natural number $\ell$, we write $\ell\Sigma$ to represent $\ell$ times the product of $\Sigma$. 

\begin{defin}\label{def: trop+ind_volumes}
 Let $\Sigma\subseteq R(D)$ be a 
 $\BT$-submodule.
 \begin{enumerate} 
 \item[(i)] We define the \emph{independence  volume} and the \emph{BN-volume} of $\Sigma$ as 
\[
    \vol_{\ind, \Gamma}^{\BT}(\Sigma)\coloneqq \limsup_{\ell\to\infty}\frac{r_{\ind}(\ell \Sigma)}{\ell} \quad \text{ and } \quad \vol_{\rBN, \Gamma}^{\BT}(\Sigma)\coloneqq \limsup_{\ell\to\infty}\frac{r_{\rBN}(\ell \Sigma)}{\ell},
\]
respectively.   

\item[(ii)] 
Similarly, we define the \emph{independence  volume} and the \emph{BN-volume} of $D$ as 
\[
    \vol_{\ind, \Gamma}^{\BT}(D)\coloneqq \limsup_{\ell\to\infty}\frac{r_{\ind}(\ell D)}{\ell} \quad \text{ and } \quad \vol_{\rBN, \Gamma}^{\BT}(D)\coloneqq \limsup_{\ell\to\infty}\frac{r_{\rBN}(\ell D)}{\ell},
\]
respectively.   
\end{enumerate}
\end{defin}
    \begin{rem}\label{rem: vol_D_and_t_mod}
        \begin{enumerate}
            \item[(i)]
        Note that the volume $\vol_{\rBN,\Gamma}^{\BT}(D)$ agrees with the volume in \autoref{def: Vol_C(D)}. Furthermore, it will follow from \autoref{thm: deg=asymptotic ind_dim trop} that the independence volume and the BN-volume of $D$ agree.
        When $\Sigma = R(D)$, in general $\vol_{\alpha,\Gamma}^{\BT}(D)\ge\vol_{\alpha,\Gamma}^{\BT}\!\big(R(D)\big)$ for $\alpha\in\{\rBN,\ind\}$ and a strict inequality can occur, as it possible to see with the divisor $D=v$ on the ``lollipop" curve; see \autoref{exa: r_ind neq r_BN + 1}.   
    
       \item[(ii)]  The algebro-geomeric analogue of $\vol_{\rBN,\Gamma}^{\BT}\!\big(D\big)$ is the volume of graded linear series $\bigoplus_{\ell}H^0\big(X,\mathcal{O}(\ell D)\big)$, whereas the analogue of $\vol_{\rBN,\Gamma}^{\BT}\!\big(R(D)\big)$ is the volume of the subalgebra generated in degree one. These are also normally not the same. In toric geometry, this is related to the notion of \emph{normal polytopes}; we refer to \cite[\S\ 2.2.]{CLS} for more details.
       \end{enumerate}
\end{rem}
 
We have the following Theorem: 
\begin{thm}\label{thm: deg=asymptotic ind_dim trop}
 Let $D\in\Div(\Gamma)$ and let $\Sigma \subseteq R(D)$ a $\BT$-submodule. 
 \begin{enumerate}
     \item[(i)] For $D$, we have 
    \[
        \vol_{\ind, \Gamma}^{\BT}(D) = \vol_{\rBN,\Gamma}^{\BT}(D) = \max\{\deg(D),0\}.
    \]
    \item[(ii)] 
    For $\Sigma$, we have  the following sequence of inequalities
    \begin{equation}\label{equ: sharp+inequality}
        \max\{\deg(D),0\}\ge \vol_{\ind, \Gamma}^{\BT}(\Sigma)\ge \vol_{\rBN, \Gamma}^{\BT}(\Sigma) \ge \max\{r_{\rBN}(\Sigma),0\}.
    \end{equation}

Moreover, all inequalities are sharp. 
    \end{enumerate}
\end{thm}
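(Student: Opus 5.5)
Part (i) follows by sandwiching the independence rank between Baker--Norine ranks. For $\ell\ge1$ with $\deg(\ell D)\ge -1$, \autoref{equ: Inequality rk_ind and rk_BN} applied to $\Sigma=R(\ell D)$ gives $r_{\ind}(\ell D)\ge r_{\rBN}(\ell D)+1$. \autoref{thm: upper bound} gives $r_{\ind}(\ell D)\le \deg(\ell D)+1$. Hence
\[
r_{\rBN}(\ell D)+1\le r_{\ind}(\ell D)\le \ell\deg(D)+1 .
\]
When $\deg(D)>0$, dividing by $\ell$ and using \autoref{prop: deg=asymptotic dim trop. curve} (in fact the lower bound $r_{\rBN}(\ell D)\ge \ell\deg(D)-c_\Gamma$ of \autoref{lemma: lower_bound_trop_curves}) shows that both limsups equal $\deg(D)$.

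When $\deg(D)<0$, we have $R(\ell D)=\{\infty\}$ for $\ell\ge1$, so $r_{\ind}(\ell D)=0$ and the volume is $0$. When $\deg(D)=0$, the module $R(\ell D)$ is either trivial or spanned by a single function up to tropical scalars. So $r_{\ind}\le 1$, and the volume is again $0$.

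For part (ii), I would argue in three steps.

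The first step is the upper bound. Since $\ell\Sigma\subseteq R(\ell D)$, the remark after \autoref{R(D) is not a TLS} gives monotonicity $r_{\ind}(\ell\Sigma)\le r_{\ind}(\ell D)$. Part (i) then yields $\vol_{\ind}^{\BT}(\Sigma)\le\max\{\deg(D),0\}$.

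The second step is the middle inequality. \autoref{equ: Inequality rk_ind and rk_BN} applied to $\ell\Sigma\subseteq R(\ell D)$ gives $r_{\ind}(\ell\Sigma)\ge r_{\rBN}(\ell\Sigma)+1$. Dividing by $\ell$ and taking limsups gives $\vol_{\ind}\ge\vol_{\rBN}$.

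The third step is the lower bound, via superadditivity for products of modules. Suppose $r=r_{\rBN}(\Sigma)\ge0$ and $E\in\Div_+^{\ell r}(\Gamma)$. Write $E=E_1+\dots+E_\ell$ with each $E_i$ effective of degree $r$. Choose $\varphi_i\in\Sigma$ with $D-E_i+\ddiv(\varphi_i)\ge0$. Then $\varphi_1\odot\cdots\odot\varphi_\ell\in\ell\Sigma$, and since $\ddiv$ is additive on tropical products,
\[
\ell D-E+\ddiv(\varphi_1\odot\cdots\odot\varphi_\ell)\ge 0 .
\]
So $r_{\rBN}(\ell\Sigma)\ge \ell\, r_{\rBN}(\Sigma)$, which gives $\vol_{\rBN}(\Sigma)\ge r_{\rBN}(\Sigma)$. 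Nonnegativity of the volume follows from $r_{\rBN}(\ell\Sigma)\ge -1$ together with division by $\ell$.

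For sharpness I would exhibit three examples, and I expect this to be the main obstacle.

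The first example handles both outer inequalities at once. Take $\Sigma=R(D)$ on a tree, say a single edge with $D=v_1$. There $R(D)$ is a tropical linear series and $\ell R(D)=R(\ell D)$, so every term equals $\deg(D)=r_{\rBN}(\Sigma)$.

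The second example shows that the middle inequality can be strict. For this I would use the lollipop example referred to in \autoref{rem: vol_D_and_t_mod}, or a submodule like the one in \autoref{exa: trop_edge}, and compute $\ell\Sigma$ explicitly. The hardest point is controlling $r_{\rBN}(\ell\Sigma)$ versus $r_{\ind}(\ell\Sigma)$ for all $\ell$.

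The third example shows that the two outer inequalities can be strict. Here a degree-one-generated module with a restricted slope set suffices. One candidate is $\Sigma=\langle 0\rangle\subseteq R(D)$ with $\deg(D)>0$: then $\ell\Sigma$ consists of constants only, so both volumes are $0<\deg(D)$.
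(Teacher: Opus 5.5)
Your proof of (i) and of the chain of inequalities in (ii) is correct and follows the paper's argument. The upper bound comes from \autoref{thm: upper bound}; you route it through monotonicity and (i), while the paper applies the bound to $\ell\Sigma$ directly. The middle inequality comes from \autoref{equ: Inequality rk_ind and rk_BN} applied to $\ell\Sigma$. The lower bound comes from splitting $E\in\Div_+^{\ell r}(\Gamma)$ into $\ell$ pieces and multiplying the corresponding $\varphi_i$. Your tree example ($D=v_1$ on one edge, $\Sigma=R(D)$) is also the paper's, and it already shows that every bound in \eqref{equ: sharp+inequality} is attained. That is the sense of sharpness the paper establishes; the paper additionally shows that the middle inequality can be strict.

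The problems are in your extra strictness claims.

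First, $\Sigma=\langle 0\rangle$ (which needs $D\ge 0$, not just $\deg(D)>0$) does not make the last inequality strict. Since $D+\ddiv(0)=D$, you get $r_{\rBN}(\Sigma)=0$. Since $\ell\Sigma$ consists of constants, $\vol_{\rBN,\Gamma}^{\BT}(\Sigma)=0=\max\{r_{\rBN}(\Sigma),0\}$. So this example only shows that the first inequality can be strict. Strictness of the last inequality needs a module whose products gain rank faster than $\ell\, r_{\rBN}(\Sigma)$. For instance, take the unit segment with $D=3v_1$ and let $\Sigma\subseteq R(D)$ be the submodule of functions all of whose slopes lie in $\{0,1,3\}$.
\begin{itemize}
\item Such functions have at most two interior breakpoints, so $r_{\rBN}(\Sigma)=2$.
\item $\ell\Sigma$ contains the products $sx$ for all $s\in\{0,\dots,3\ell\}\smallsetminus\{3\ell-1\}$. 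Taking minima, it therefore contains every concave function with such slopes.
\item Given $E$ of degree $3\ell-2$, add twice its leftmost point to avoid the slope $3\ell-1$. This gives $r_{\rBN}(\ell\Sigma)\ge 3\ell-2$, hence $\vol_{\rBN,\Gamma}^{\BT}(\Sigma)=3>2$.
\end{itemize}

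Second, you leave the strict middle inequality uncomputed and call it the hardest point, but it is short. Take the submodule of \autoref{exa: trop_edge}.
\begin{itemize}
\item Every element of $\ell\Sigma$ is a finite minimum of products of functions that are constant near $v_2$. Hence no divisor $\ell D+\ddiv(\varphi)$ with $\varphi\in\ell\Sigma$ contains $v_2$, and $r_{\rBN}(\ell\Sigma)=0$.
\item The products $k\min\{x,\tfrac12\}$ for $0\le k\le\ell$ have pairwise distinct slopes at $v_1$. So $r_{\ind}(\ell\Sigma)=\ell+1$.
\end{itemize}
This gives $1=\vol_{\ind,\Gamma}^{\BT}(\Sigma)>\vol_{\rBN,\Gamma}^{\BT}(\Sigma)=0$. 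The paper's lollipop example with $\Sigma=R(v)$ works the same way: $R(v)$, and hence $\ell R(v)$, consists of functions constant on the loop, while the slopes $0,\dots,\ell$ occur on the bridge.
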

\begin{proof}
      
    We first prove (ii). If $\deg(D)<0$, then $\vol_{\ind, \Gamma}^{\BT}(\Sigma)=\vol_{\rBN, \Gamma}^{\BT}(\Sigma)=0$.
    
    If $\deg(D) \ge 0$, we use \autoref{thm: upper bound} and \autoref{equ: Inequality rk_ind and rk_BN}, to write $\deg(\ell D)+1 \ge r_{\ind}(\ell\Sigma)\ge r_{\rBN}\ell\Sigma)+1$. Multiplying by $1/\ell$ and taking the limit superior, we obtain 
    \[
        \deg(D)\ge \vol_{\ind, \Gamma}^{\BT}(\Sigma)\ge \vol_{\rBN, \Gamma}^{\BT}(\Sigma).
    \]

Moreover, in (the proof of) \autoref{equ: Inequality rk_ind and rk_BN} we show that for a small enough intervel $I$, the set of slopes $s_{v,\zeta}(\Sigma)=\{ s_1 < \cdots < s_{m}\}$ has cardinality at least $r_{\rBN}(\Sigma)+1$ i.e.\ $m\ge r_{\rBN}(\Sigma)+1$. Given two finite sets $A,B\subset \BZ$, it follows that $\#(A+B)\ge \#A + \#B -1$, where $A+B$ stands for the Minkowski sum. By induction and this last inequality, it follows that $\#s_{v,\zeta}(\ell \Sigma)\ge \ell r_{\rBN}(\Sigma)+1$, in particular $r_{\ind}(\ell \Sigma)\ge \ell r_{\rBN}(\Sigma)+1$. Thus, $\vol_{\ind,\Gamma}^{\BT}(\Sigma)\ge r_{\rBN}(\Sigma)$. 

 If $r_{\rBN}(\Sigma)<0$, then $0=\vol_{\rBN,\Gamma}^{\BT}(\Sigma)>r_{\rBN}(\Sigma)$. Now, assume $r\coloneqq r_{\rBN}(\Sigma)\ge0$  and let $E\in\Div_{+}^{\ell r}(\Gamma)$ be arbitrary. Write $E=E_1 + \cdots + E_{\ell}$ with $E_i\in \Div_{+}^{r}(\Gamma)$ for each $i$. As $r=r_{\rBN}(\Sigma)$, there is $\varphi_{i}\in \Sigma$ with $D +\ddiv(\varphi_i) \ge E_i$ for each $i=1,\dots,\ell$. Thus, $\varphi_1\odot\cdots\odot\varphi_\ell\in \ell\Sigma$ and $\ell D +\ddiv(\varphi_1\odot\cdots\odot\varphi_\ell)\ge \sum E_i$. In other words $r_{\rBN}(\ell \Sigma) \ge \ell r_{\rBN}(\Sigma)$, which implies that $\vol_{\rBN,\Gamma}^{\BT}(\Sigma)\ge r_{\rBN}(\Sigma)$.

Thus, we can write 
\begin{equation*}\label{equ: ine_vol_t_mod}
    \max\{\deg(D),0\}\ge \vol_{\ind, \Gamma}^{\BT}(\Sigma)\ge \vol_{\rBN, \Gamma}^{\BT}(\Sigma) \ge \max\{r_{\rBN}(\Sigma),0\}.
\end{equation*}

 The Inequalities in \eqref{equ: sharp+inequality} are sharp, indeed if $\Gamma$ and $D=v_1$ are as in \autoref{exa: trop_edge}, then all Inequalities in \eqref{equ: sharp+inequality} are equalities, namely $\deg(D)=r_{\rBN}\!\big(R(D)\big)=1$. If $\Gamma$ and $D=v$ are as in \autoref{Fig: Lollipop}, then  
\[
    1=\max\{\deg(D),0\}=\vol_{\ind, \Gamma}^{\BT}(\Sigma)> \vol_{\rBN, \Gamma}^{\BT}(\Sigma) = \max\{r_{\rBN}(\Sigma)=0,0\}.
\]

In contrast, for a divisor $D\in \Div(\Gamma)$, we use the same idea and steps as before to obtain
\[
   \max\{\deg(D),0\}\ge \vol_{\ind, \Gamma}^{\BT}(D)\ge \vol_{\rBN, \Gamma}^{\BT}(D) = \max\{\deg(D),0\},
\]
where the equality above on the right-hand side follows from \autoref{rem: BN_rank_mod =BN+rank_div} and \autoref{prop: deg=asymptotic dim trop. curve}.
\end{proof}

We use \autoref{thm: deg=asymptotic ind_dim trop} to write $\vol_{\ind, \Gamma}^{\BT}(D)=\vol_{\rBN,\Gamma}^{\BT}(D)$ simply as $\vol_{\Gamma}^{\BT}(D)$ and call it the \emph{tropical volume} of $D$. \autoref{thm: deg=asymptotic ind_dim trop} also shows that the volume of a divisor satisfies all expected properties of the (classical) volume. For instance, it is linear, or more precisely $\vol_{\Gamma}^{\BT}(aD)=a\vol_{\Gamma}^{\BT}(D)$ for any $a\in\BN$, and $D\in\Div(\Gamma)$ is big if and only if $\deg(D)>0$. 

\medskip

Let $K_\Gamma\coloneqq \sum_{v\in\mcV(\Gamma)}\big(\hspace{-2pt}\val(v)-2\big)v$ be the \emph{canonical divisor} of $\Gamma$. The \emph{independence} and \emph{Baker--Norine} \emph{Euler characteristic} of $D \in\Div(\Gamma)$ is defined as 
\[
    \chi_{\ind}(D)\coloneqq r_{\ind}(D)-r_{\ind}(K_\Gamma-D) \quad \text{ and } \quad \chi_{\rBN}(D)\coloneqq r_{\rBN}(D)-r_{\rBN}(K_\Gamma-D),
\]
respectively.

\begin{example}\label{exa: r_ind neq r_BN + 1}
    Let $\Gamma$ be a tropical curve that is a tree; thus, $\Pic(\Gamma)=\Z$. Moreover, for any $D\in\Div(\Gamma)$ it is not difficult to verify that $r_{\ind}(D)=r_{\rBN}(D)+1$, that is, $R(D)$ is a tropical linear series. 

    Consider the ``simplest" case which is not a tree. Namely, the connected multigraph with one edge and one loop; see \autoref{Fig: Lollipop}.
    \begin{figure}[ht!]
        \centering
        \begin{tikzpicture}[thick]
            \pgfmathsetmacro{\a}{.7}
            \draw (4*\a,0) circle (\a);
            \draw (\a,0) -- (3*\a,0);
            \filldraw[black] (\a,0) circle (2pt);
            \filldraw[black] (3*\a,0) circle (2pt) node[xshift=8pt] {$v$};
        \end{tikzpicture}
        \caption{The ``lollipop" tropical curve $\Gamma$.}
        \label{Fig: Lollipop}
    \end{figure}
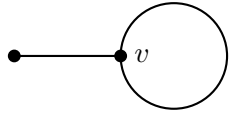
    
    Let $D = v \in \Div(\Gamma)$ and for $\ell\in\BN$ and consider $\ell D$. Analyzing the possible slopes on the edge, for a function $\varphi\in R(\ell D)$, which ranges from $0$ to $\ell$, we can see that $r_{\ind}(\ell D)=\ell + 1$. On the other hand,
    it follows that $r_{\rBN}(\ell D)=\ell -1$, for all $\ell\ge 1$. Thus, it follows that $r_{\ind}(\ell D)> r_{\rBN}(\ell D)+1$, for each $\ell\ge1$. This shows that $R(\ell D)$ is not a tropical linear series, for any $\ell \ge 1$. 

    Nevertheless, as stated in \autoref{thm: deg=asymptotic ind_dim trop} and \autoref{prop: deg=asymptotic dim trop. curve}, and via a direct calculation, it follows that 
    \[
        \vol_{\Gamma}^{\BT}(D)=  \limsup_{\ell\to\infty}\frac{r_{\ind}(\ell D)}{\ell} = \limsup_{\ell\to\infty}\frac{r_{\rBN}(\ell D)}{\ell} = \deg(D). 
    \]
    The complete linear series $\Sigma=R(\ell D)$
    is finitely generated. 
    Thus, for any $\ell \ge 2$ we have a $\BT$-module $\Sigma$ that is not a tropical linear series, that is, $r_{\ind}(\Sigma)> r_{\rBN}(\Sigma)+1$ and is finitely generated. 
\end{example}

This example illustrates that the volume of a divisor gives us the expected result, even if the linear series is not tropical.  
  
\begin{thm}[Asymptotic Riemann--Roch for tropical curves]\label{thm:asymptotic-rr}
    Let $D\in\Div(\Gamma)$ and $\iota$ an index in $\{\ind,\rBN\}$, then 
    \[
        \chi_{\iota}(\ell D)=\deg(D)\ell + o(1).
    \]
\end{thm}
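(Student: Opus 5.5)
The plan is to prove the stronger statement $\chi_{\iota}(\ell D)=\deg(D)\ell+O(1)$, which gives the claim after dividing by $\ell$; this is the same normalization as in the proof of \autoref{thm:asymptotic-rr_graph}. The main input is a two-sided estimate, valid for every divisor $E\in\Div(\Gamma)$ and for both indices $\iota\in\{\ind,\rBN\}$:
\[
\deg(E)-c_\Gamma \le r_{\rBN}(E)\le r_{\iota}(E)\le \deg(E)+1 \quad\text{whenever } \deg(E)\ge -1 .
\]
\begin{itemize}
\item The lower bound is \autoref{lemma: lower_bound_trop_curves}.
\item The middle inequality is trivial for $\iota=\rBN$. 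For $\iota=\ind$ it follows from \autoref{equ: Inequality rk_ind and rk_BN} applied to $\Sigma=R(E)$, using \autoref{rem: BN_rank_mod =BN+rank_div}.
\item The upper bound is \autoref{thm: upper bound} for $\iota=\ind$, and \autoref{rem: prop_trop_linear_series} for $\iota=\rBN$.
\end{itemize}
For $\deg(E)<-1$ we instead use that $R(E)$ is trivial, so $r_{\rBN}(E)=-1$ and $r_{\ind}(E)\le 0$. In every case $r_\iota(E)$ is bounded by a constant independent of $E$.

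The proof then splits into three cases according to the sign of $\deg(D)$. Write $k=\deg(K_\Gamma)$.
\begin{itemize}
\item If $\deg(D)>0$, the estimate gives $r_\iota(\ell D)=\ell\deg(D)+O(1)$. For $\ell$ large, $\deg(K_\Gamma-\ell D)=k-\ell\deg(D)<-1$, so $r_\iota(K_\Gamma-\ell D)$ is bounded. Hence $\chi_\iota(\ell D)=\ell\deg(D)+O(1)$.
\item If $\deg(D)<0$, the roles swap. Now $r_\iota(\ell D)$ is bounded for $\ell\ge1$. Also $\deg(K_\Gamma-\ell D)=k-\ell\deg(D)\ge -1$ for large $\ell$, so the estimate gives $r_\iota(K_\Gamma-\ell D)=k-\ell\deg(D)+O(1)$. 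Subtracting, $\chi_\iota(\ell D)=\ell\deg(D)-k+O(1)=\ell\deg(D)+O(1)$, since $k$ is a fixed constant.
\item If $\deg(D)=0$, both $\ell D$ and $K_\Gamma-\ell D$ have fixed degrees $0$ and $k$. The upper bounds give $r_\iota(\ell D)\le 1$ and $r_\iota(K_\Gamma-\ell D)\le \max\{k+1,0\}$, and both are bounded below by $-1$. So $\chi_\iota(\ell D)=O(1)$.
\end{itemize}
This argument avoids any Riemann--Roch theorem for $r_{\ind}$, which is good because \autoref{exa: R-R does not holt for r_ind} shows that one fails. It also avoids the subadditivity trick used in the proof of \autoref{thm:asymptotic-rr_graph}, because \autoref{lemma: lower_bound_trop_curves} already gives linear lower bounds directly.

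The step that needs the most care is the independence rank in degenerate situations. One issue is the convention for $r_{\ind}$ when $R(E)$ contains no finite function, i.e.\ when $\deg(E)<0$ or $|E|=\varnothing$. We only need that $r_{\ind}$ stays bounded there, which holds since no nonempty tuple of elements of the trivial module is independent. The other issue is that \autoref{thm: upper bound} is stated only for $\deg(E)\ge -1$. In the borderline degrees the estimate is used only through fixed constants, so this does not affect the argument. Finally, I would note explicitly that the statement's $o(1)$ is meant after normalization by $\ell$, consistent with \autoref{thm:asymptotic-rr_graph}; the argument actually gives an $O(1)$ error term.
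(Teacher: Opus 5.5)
Your proof is correct, but it takes a more direct route than the paper.

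\textbf{The paper's route.} For $\deg(D)\ge 0$ the paper cites the volume theorem \autoref{thm: deg=asymptotic ind_dim trop} for $r_\iota(\ell D)$ and uses that $r_\iota(K_\Gamma-\ell D)$ is bounded. For $\deg(D)<0$ it fixes an auxiliary $\ell'$ with $r_{\rBN}(K_\Gamma-\ell'D)\ge 0$. It then bounds $r_\iota\big(K_\Gamma-(\ell'+\ell)D\big)$:
\begin{itemize}
\item from below by $r_{\rBN}(K_\Gamma-\ell'D)+r_{\rBN}(-\ell D)$, using superadditivity \eqref{ineq: dim|D|_is_convex} and \autoref{equ: Inequality rk_ind and rk_BN};
\item from above by $\deg\big(K_\Gamma-(\ell'+\ell)D\big)+1$, using \autoref{thm: upper bound}.
\end{itemize}
Finally it applies \autoref{prop: deg=asymptotic dim trop. curve} to $-D$. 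In both cases it only records $\limsup_{\ell\to\infty}\big(\chi_\iota(\ell D)-\deg(D)\ell\big)/\ell=0$.

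\textbf{Your route.} You combine \autoref{lemma: lower_bound_trop_curves} with the two upper bounds into one uniform estimate $\deg(E)-c_\Gamma\le r_\iota(E)\le\deg(E)+1$, valid for $\deg(E)\ge -1$. You then apply it to $\ell D$ and to $K_\Gamma-\ell D$ in the same way.

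\textbf{Comparison.} The volume theorem is itself deduced from that same lower bound, and the paper's negative-degree case invokes \autoref{lemma: lower_bound_trop_curves} anyway. So your argument uses no extra input. It dispenses with superadditivity and the auxiliary $\ell'$. It also gives the sharper conclusion that $\chi_\iota(\ell D)-\deg(D)\ell$ stays bounded, which yields a genuine limit rather than only a $\limsup$. What the paper's route buys is mainly presentation: asymptotic Riemann--Roch appears as a corollary of the volume formula, parallel to the classical deduction.

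\textbf{On the error term.} Your remark about normalization is right, and $O(1)$ is the correct form. On the one-edge curve of \autoref{exa: trop_edge}, every $D$ with $\deg(D)\ge 0$ satisfies $\chi_\iota(\ell D)=\ell\deg(D)+1$ for both indices. So the literal $o(1)$ fails.

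\textbf{One wording fix.} The sentence ``In every case $r_\iota(E)$ is bounded by a constant independent of $E$'' should be restricted to $\deg(E)<-1$, where $r_\iota(E)\in\{-1,0\}$. As written it contradicts your own lower bound.
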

\begin{proof}
    First consider $\deg(D)\ge0$. In this case $r_{\iota}(K_\Gamma-\ell D)$ is bounded. Thus, by \autoref{thm: deg=asymptotic ind_dim trop}
    \[
        \limsup_{\ell\to\infty}\frac{\chi_{\iota}(\ell D) - \deg(D)\ell}{\ell}=0.
    \]
    If $\deg(D)<0$, then $r_{\iota}(\ell D)$ is bounded. For a large enough fixed $\ell'\in \BN$ and all $\ell \in \BN$, it follows that $\deg\!\big((K_\Gamma-\ell'D) -\ell D\big)\geq -1$, and $r_{\rBN}(K_\Gamma -\ell'D)\ge0$ by \autoref{lemma: lower_bound_trop_curves}. Thus by \autoref{thm: upper bound} and \autoref{equ: Inequality rk_ind and rk_BN} it follows that
    \begin{equation*}
        \deg\!\big((K_\Gamma - \ell'D) - \ell D\big) + 1 \ge r_{\iota}\!\big((K_\Gamma - \ell'D) -\ell D\big) \ge r_{\rBN}(K_\Gamma-\ell'D) + r_{\rBN}(-\ell D).
    \end{equation*}
    The Inequalities above and \autoref{prop: deg=asymptotic dim trop. curve} imply that 
    \[
        \limsup_{\ell\to\infty}\frac{\chi_{\iota}(\ell D) - \deg(D)\ell}{\ell}=0.
    \]
    This finishes the proof.
\end{proof}

If a divisor $D$ in a tropical curve $\Gamma$ is such that, both $R(D)$ and $R(K_\Gamma-D)$ are tropical linear series, then by Riemann--Roch theorem for $\rBN$-rank, it follows that  
\[
    \chi_{\ind}(D)= \chi_{\rBN}(D) = \deg(D) - g_{\Gamma} + 1,
\]
that is, the independence rank also satisfies the Riemann--Roch theorem. 
Thus, the natural question is: If $R(D)$ is a tropical linear series, does it imply that $R(K_\Gamma - D)$ is a tropical linear series? The answer is negative, as we can see in the following Example:

\begin{example}[$r_{\ind}$ does not satisfy Riemman--Roch formula]\label{exa: R-R does not holt for r_ind}
    Let $\Gamma$ be the ``lollipop" tropical curve of \autoref{Fig: Lollipop}. Let $D=0\in\Div(\Gamma)$, and note that $r_{\ind}(D)=r_{\rBN}(D)+1=1$, that is, $R(D)$ is a tropical linear series. On the other hand, we know from \autoref{exa: r_ind neq r_BN + 1} that $r_{\ind}(K_\Gamma-D)=2>r_{\rBN}(K_{\Gamma})+1=1$, i.e.\ $R(K_\Gamma)$ is not a tropical linear series. Furthermore, 
    \[
        -1 = \chi_{\ind}(D) \neq \chi_{\rBN}(D)=\deg(D) - g_{\Gamma} + 1 = 0. 
    \]
\end{example}

This example shows that, although the Asymptotic Riemann--Roch theorem for the independence rank holds for any complete linear series---it does not need to be tropical---whereas the Riemann--Roch theorem for the independence rank does not hold in full generality, i.e.\ for any complete linear series $R(D)$.

\section{Tropicalization of linear series}\label{Sec: tropicalization_Lin_Ser}

This Section connects the tropical theory developed here with classical algebraic geometry via tropicalization. 
Using this framework, we show that tropical volume is compatible with tropicalization. This result supports the idea that tropical geometry captures essential asymptotic features of linear series on algebraic curves.

\subsection{Specialization map}\label{sec: Specialization_map}
Let $R$ be a DVR with fraction field $K$, and residue field $k$. Let $C$ be a smooth projective curve over $K$, and $\mathfrak{C}/R$ a semistable model of $C$, i.\,e.\,the speical fiber $\mathfrak{C}_k$ is reduced and has only ordinary double points as singularities. 

We recall that given the irreducible decomposition $X=\bigcup_{i=1}^{n} X_{i}$ of an algebraic curve $X$, its dual multigraph $\Gamma_{X}$ consists of a vertex for each irreducible component, and an edge for each intersection point; see \cite[pg.\ 249]{Harris_Morrison_1998} for more details concerning the dual multigraph of an algebraic curve, and \autoref{fig: degeneration_1} and \autoref{fig: degeneration_triangle} for illustrations. When the curve $X$ is clear from context, we write $\Gamma$ instead of $\Gamma_X$.

Moreover, let 
\[
    \begin{tikzcd}
        \rho:\Div(C) \ar[r, two heads] & \Div(\Gamma)
    \end{tikzcd}
\]
be the \emph{specialization map}, where $\Gamma$ is the dual multigraph of the special fiber $\mathfrak{C}_{k}$. 

The specialization map preserves the degree of divisors i.e.\ for any $D\in \Div(C)$, it follows that $\deg(D)=\deg\!\big(\rho(D)\big)$; see \cite[pg.\ 621]{Baker_2008}, \cite[pg.\ 7437]{Baker_Rabinoff_2015} or \cite{Baker_Jensen_2016} for more details regarding the specialization map and its properties. In particular, from \autoref{thm: deg=asymptotic ind_dim trop} we conclude that $\vol_{C}(D)=\vol_{\Gamma}^{\BT}\!\big(\rho(D)\big)$. 
As a consequence, the independence rank of $R\big(\rho(D)\big)$ recovers the well--known classical formula $\vol_{C}(D) =\max\{\deg(D),0\}$. We summarize these observations in the following proposition.
\begin{prop}\label{prop:specialization}
    Let $C$, $\Gamma$ and $\rho$ as above. Then for any divisor $D \in \Div(C)$ it holds that 
    \[
        \vol_C(D) = \vol_{\Gamma}^{\BT}\!\big(\rho(D)\big).
    \]
\end{prop}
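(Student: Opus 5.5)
The plan is to compute both sides separately and observe that each equals $\max\{\deg(D),0\}$. The two computations meet through the fact that the specialization map preserves degree.

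\emph{The algebraic side.} Here $\vol_C(D)$ is the classical volume, defined as $\limsup_{\ell\to\infty} h^0(C,\mathcal{O}_C(\ell D))/\ell$, where $C$ is a smooth projective curve over $K$. I will treat two cases.
\begin{enumerate}
\item If $\deg(D)<0$, then $h^0(\ell D)=0$ for all $\ell\ge1$, so $\vol_C(D)=0$. If $\deg(D)=0$, then $h^0(\ell D)\le 1$, so again $\vol_C(D)=0$.
\item If $\deg(D)>0$, Riemann--Roch gives $h^0(\ell D)=\ell\deg(D)+1-g$ as soon as $\ell\deg(D)>2g-2$, since then $h^0(K_C-\ell D)=0$. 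Dividing by $\ell$ and letting $\ell\to\infty$ gives $\vol_C(D)=\deg(D)$.
\end{enumerate}
Hence $\vol_C(D)=\max\{\deg(D),0\}$. If one prefers to normalize with $h^0-1$, the rank of the linear series, nothing changes, since the shift by a bounded amount disappears after dividing by $\ell$.

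\emph{The tropical side.} Regard the dual multigraph $\Gamma$ of $\mathfrak{C}_k$ as a metric multigraph (tropical curve) with its edge lengths. By \autoref{thm: deg=asymptotic ind_dim trop}(i), and equally by \autoref{prop: deg=asymptotic dim} if $\Gamma$ is viewed only combinatorially, we have $\vol_\Gamma^{\BT}(\rho(D))=\max\{\deg(\rho(D)),0\}$. This holds for either rank, BN or independence, which is what justifies the unified notation $\vol_\Gamma^{\BT}$. The only input still needed is $\deg(\rho(D))=\deg(D)$. This follows because $\rho$ sends each point of $C(K)$, or more generally each closed point of degree $d$, to a divisor of the same degree supported on the component or components through which its closure specializes. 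I will cite this from \cite{Baker_2008} rather than reprove it.

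Combining the two sides gives the equality. The only point needing care is bookkeeping: $\rho(D)$ must be viewed consistently, either as a divisor on the finite multigraph or on the metric graph. Both volume theorems give the same answer, so the choice does not matter.
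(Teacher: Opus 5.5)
Your proposal is correct and is the paper's proof: both sides are identified with $\max\{\deg(D),0\}$, using that $\rho$ preserves degree and the tropical volume theorem for $\Gamma$. The only difference is that you derive the classical formula $\vol_C(D)=\max\{\deg(D),0\}$ from Riemann--Roch, whereas the paper simply quotes it from Lazarsfeld.
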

\begin{proof} We have 
\begin{equation}\label{equation: deg(D_X)=vol(D_G)}
    \vol_{C}(D) = \max\{\deg(D),0\} = \max\!\big\{\!\deg\!\big(\rho(D)\big),0\big\} = \vol_{\Gamma}^{\BT}\!\big(\rho(D)\big).
\end{equation}
\end{proof}
We refer to \cite[\S\;2.2.C]{Lazarsfeld_2004_I} for more details regarding the volume of divisors. 

\begin{example}[Specialization of a divisor]\label{exa: vol(D)=vol(p(D))}
    Let $R=\BC[\![t]\!]$ be the ring of formal power series with coefficients in $\BC$. It is a DVR with uniformizing parameter $t$, maximal ideal $\langle t \rangle$, its fraction field is the field of \emph{Laurent series} $K=\BC(\!(t)\!)$ and its residue field is $k = \BC$.

    \begin{figure}[ht]
    \centering
    \begin{tikzpicture}[>=Latex, line cap=round, line join=round,
    x=1cm, y=1cm, scale=.65, line width=1pt, every node/.style={font=\normalsize}
        ]
        \def\W{9.0}
        \def\H{3.5}
        \def\ybase{-1.8}
        \def\xzero{2.0}
        \def\xn{6.6}

        \def\kappa{0.75}

        \def\fiberZeroCurv{0.25}
        \def\fiberZeroV{0.38}
        \def\fiberZeroBottom{0.10}
        \def\fiberZeroHeight{2.90}

        \def\fiberTCurvA{0.60} 
        \def\fiberTCurvB{0.45} 
        \def\fiberTBottom{0.85}
        \def\fiberTHeight{1.80}

        \coordinate (TL) at (0,\H);
        \coordinate (TR) at (\W,\H);
        \coordinate (BR) at (\W,0);
        \coordinate (BL) at (0,0);

        \coordinate (cR1) at ({\kappa*0.55},{\kappa*(-0.9)});
        \coordinate (cR2) at ({\kappa*0.55},{\kappa*( 0.9)});
        \coordinate (cB1) at ({\kappa*(-2.2)},{\kappa*(-0.35)});
        \coordinate (cB2) at ({\kappa*( 2.2)},{\kappa*(-0.35)});

        \draw (TR) .. controls ($(TR)+(cB1)$) and ($(TL)+(cB2)$) .. (TL) .. controls ($(TL)+(cR1)$) and ($(BL)+(cR2)$) .. (BL) .. controls ($(BL)+(cB2)$) and ($(BR)+(cB1)$) .. (BR) .. controls ($(BR)+(cR2)$) and ($(TR)+(cR1)$) .. (TR) -- cycle;

        \node at (4.5,\H + .4) {$V(xy - tz^2)\subset \BP^2_R$};

        \coordinate (Ztop) at (\xzero, {\fiberZeroBottom + \fiberZeroHeight});
        \coordinate (Zmid) at (\xzero, {\fiberZeroBottom + 0.5*\fiberZeroHeight});
        \coordinate (Zbot) at (\xzero, {\fiberZeroBottom});

        \draw (Ztop) .. controls ($(Ztop)+(-\fiberZeroCurv,-\fiberZeroV)$) and ($(Zmid)+(-\fiberZeroCurv, \fiberZeroV)$) .. (Zmid) .. controls ($(Zmid)+(\fiberZeroCurv,-\fiberZeroV)$) and ($(Zbot)+(\fiberZeroCurv, \fiberZeroV)$) .. (Zbot);

        \node at (\xzero - .6, {\fiberZeroBottom + 1.4}) {$\mathfrak{C}_{K}$};

        \coordinate (Ttop) at (\xn, {\fiberTBottom + \fiberTHeight});
        \coordinate (Tbot) at (\xn, {\fiberTBottom});
        \coordinate (Tmid) at (\xn, {\fiberTBottom + 0.5*\fiberTHeight});

        \draw ($(Ttop)+(-\fiberTCurvA,.5)$) .. controls ($(Tmid)+(-0.2,0.3)$) .. (Tmid) .. controls ($(Tmid)+(0.2,-0.3)$) .. ($(Tbot)+(\fiberTCurvA,.4)$);

        \draw ($(Ttop)+(\fiberTCurvB,-.4)$) .. controls ($(Tmid)+(0.2,0.3)$) .. (Tmid) .. controls ($(Tmid)+(-0.2,-0.3)$) .. ($(Tbot)+(-\fiberTCurvB,-.8)$);

        \node at ($(Tmid)+(0.8,0)$) {$\mathfrak{C}_{k}$};

        \draw (-0.1,\ybase) -- (\W+.3,\ybase);

        \fill (6.62,1.75) circle (3pt);
        \fill (\xzero,\ybase) circle (3pt);
        \fill (\xn,\ybase)    circle (3pt);

        \node[below=4pt] at (\xzero,\ybase) {$\langle 0\rangle$};
        \node[below=4pt] at (\xn,\ybase) {$\langle t\rangle$};

        \draw[dashed] (\xzero,{\fiberZeroBottom - .2}) -- (\xzero,\ybase);
        \draw[dashed] (\xn,{\fiberZeroBottom - .2}) -- (\xn,\ybase);

        \def\xSpec{-1.5}

        \coordinate (SpecPoint) at (\xSpec,\ybase);
        \node at (SpecPoint) {$\mathrm{Spec}\,R$};

        \node at (\xSpec,2.8) {$\mathbb{P}^{2}_{R}$};

        \draw[->] (\xSpec,2.2) -- (\xSpec,\ybase+0.5);
        \node[left] at (\xSpec,.7) {$\pi$};

        \end{tikzpicture}
        \begin{tikzpicture}[scale=1, line width=1pt, every node/.style={font=\normalsize}]
            \def\r{.8}
            \node[left]  (vx) at (-\r,0) {$v_{x}$};
            \node[right] (vy) at (\r,0) {$v_{y}$};
            \fill (-\r,0) circle (2pt);
            \fill (\r,0) circle (2pt);
            \node[right] at (260:1.5) {$\Gamma$};
            \node[right] at (270:3) {};
            \node[right] at (180:4) {};

            \draw (vx) -- (vy);
        \end{tikzpicture}
    \caption{The family of conics $\mathfrak{C}/R$ with smooth generic fiber $\mathfrak{C}_K$, and special fiber $\mathfrak{C}_k$, a nodal curve of compact type with two components (on the left), and the dual multigraph $\Gamma$ of the special fiber $\mathfrak{C}_{k}$ (on the right).}
    \label{fig: degeneration_1}
\end{figure}
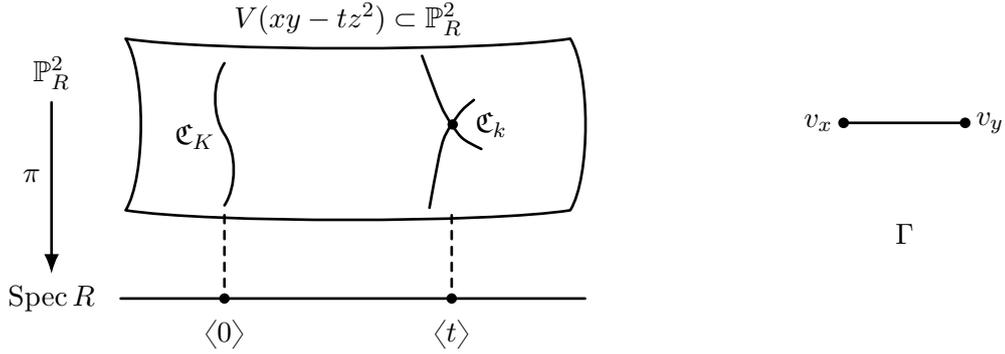

    The dual graph $\Gamma$ of $\mathfrak{C}_{k}$ has two vertices $v_x, v_y$, that correspond to the components $V(x),V(y)\subset \mathfrak{C}_k$ respectively, and one edge $e$, which corresponds to the nodal singularity. See the graph $\Gamma$ in \autoref{fig: degeneration_1} (on the right).

    Let $D_{K}\in\Div(\mathfrak{C}_{K})$ be the divisor cut out by the line $y-tx=0$, that is, $D_{K}=P_{K}+Q_{K}$, where $P_{K}=[1:t:1]$ and $Q_{K}=[1:t:-1]$. In the special fiber $\mathfrak{C}_k$, the divisor $D_{K}$ specializes to $D_k=P_k+Q_k \in\Div(\mathfrak{C}_k)$ where $P_{k}=[1:0:1]$ and $Q_{k}=[1:0:-1]$. As $\Supp(D_k)$ is contained in the component $V(y)\subset \mathfrak{C}_k$ it follows that 
    $D\coloneqq\rho(D_K)=2v_y\in\Div(\Gamma)$.

    We know that $r_{\ind}(D)=3$. More specifically, let $s\in[0,1]$ be a parameter of the edge $e$ that starts in $v_y$ and identify $e$ with $[0,1]$. Thus, it follows that $\varphi_0(s)=0,\varphi_1(s)=s$, and $\varphi_{2}(s)=2s$ in $R(D)\subset \PL(\Gamma)$ are tropically linearly independent. Furthermore, as $\Gamma$ is a tree, for any $\ell\in\BN$, with the same idea we conclude that $r_\ind(\ell D)= 2\ell + 1$, where $\varphi_{i}(s) = i s$, for $i=0,\dots,2\ell$ are tropically linearly independent. Thus, 
    \[
        \deg(D_{K}) = \vol_{\Gamma}^{\BT}(D)=2,
    \]
    which is a special case of Equations \eqref{equation: deg(D_X)=vol(D_G)}.
    That is, the degree of the divisor $D_K$ on the curve $C$ agrees with the tropical volume. This example is a slight modification and more detailed version of {\cite[Exa.\ 3.1]{Baker_Jensen_2016}}, where the authors consider the independence rank of the complete linear series $R(D)$.
\end{example}
\subsection{Extended tropicalization}
Let $K$ be an algebraically closed field, which is complete with respect to a nontrivial non-archimedean valuation $\nu \colon K^* \to \R$. The usual tropicalization map associates to a closed subvariety
$X$ of the torus $\BG_{K}^m = (K^*)^m$
the underlying set $\trop(X)$ of a finite polyhedral complex in $\R^m$ of
dimension $\dim(X)$, which is the closure of the image of $X(K)$ under the coordinatewise valuation
map. 

Assume that $X = C$ is of dimension $1$ and let $\Gamma = \trop(X)$. Then $\Gamma$ is a (non-compact) tropical curve in the sense of this article. The map 
\[
    \begin{tikzcd}[row sep=0pt,/tikz/column 1/.append style={anchor=base east}
     ,/tikz/column 2/.append style={anchor=base west}]
        \Div(C) \arrow[r] & \Div(\Gamma), \quad 
        \displaystyle\sum_ia_i p_i \arrow[r,mapsto] & \displaystyle\sum_ia_i\trop(p_i), 
    \end{tikzcd}
\]
is clearly degree-preserving. 

One can moreover define an \emph{extended tropicalization map} of subvarieties of toric varieties; see \cite[\S\ 3, 4]{payne-ana}. Indeed, let $X \subseteq Y_{\Delta}$ be a subvariety of a toric variety $Y_{\Delta}$, defined over $K$, corresponding to a fan $\Delta$ in $N_{\R}$. In \emph{loc.\;cit.\;}S.~Payne constructs an extended tropicalization map 
\[
    \begin{tikzcd}
        \overline{\trop} \colon X(K) \ar[r] & N_{\Delta},
    \end{tikzcd}
\]
where $N_{\Delta} = \bigsqcup_{\sigma \in \Delta}N_{\R}/\langle \sigma \rangle$
is the Kajiwara–Payne compactified tropical toric variety, a partial compactification of $N_{\R}$
 obtained by adding strata corresponding to cones of the fan. The \emph{extended tropicalization of} $X$ is defined as the closure of the image $\overline{\trop}(X)$ inside $N_{\Delta}$. 

Assume $X = C \subset Y_{\Delta}$ is of dimension one and let $\Gamma$ be the extended tropicalization of $C$. The boundary $\partial\Gamma = \Gamma \smallsetminus \Gamma^{\circ}$ consists of a finite set of points, called the \emph{infinite vertices}. If all infinite vertices have valence one, or equivalently, if no two unbounded rays meet at infinity, then $\Gamma$ is naturally endowed with a metric multigraph structure and hence can be identified with a compact tropical curve; see \cite[\S\ 2]{jell}.

In this case, the map 
 \[
    \begin{tikzcd}[row sep=0pt,/tikz/column 1/.append style={anchor=base east}
     ,/tikz/column 2/.append style={anchor=base west}]
        \Div(C) \arrow[r] & \Div(\Gamma), \quad 
        \displaystyle\sum_ia_i p_i \arrow[r,mapsto] & \displaystyle\sum_ia_i\overline{\trop}(p_i),
    \end{tikzcd}
\]
is again clearly degree preserving. Note that if $D$ is a divisor on $C$ which does not meet the toric boundary, then $\overline{\trop}(D)$ is supported on $\Gamma^{\circ}$. Arguing as in the proof of \autoref{prop:specialization} we conclude that 
\begin{equation}\label{eq:vol-tro}
    \vol_C(D) = \max\!\left\{\deg\!\big(\overline{\trop}(D)\big), 0\right\} = \vol_{\Gamma}^{\mathbb{T}}\!\left(\overline{\trop}(D)\right)
\end{equation}
for any $D \in \Div(C)$. \

\begin{rem}
    If $K$ is discretely valued, then the (extended) tropicalization map factors through the specialization map, as defined in the previous Subsection. More generally, if $K$ is not necessarily discretely valued, then the analog of the specialization map is the Berkovich retraction map $\tau\colon C^{\operatorname{an}} \to \Gamma$ from the Berkovich analitification $C^{\operatorname{an}}$ of $C$ to an (extended) skeleton $\Gamma$, associated to any strongly semistable model $\mathfrak{C}$ of $C$ over the valuation ring of $K$. The (extended) tropicalization map extends continuously to $\overline{\trop} \colon C^{\operatorname{an}} \to N_{\Delta}$
    and there exists an integral, piecewise affine map $\gamma \colon \Gamma \to N_{\Delta}$ such that $\overline{\trop} = \gamma \circ \tau$; see \cite[Prop.\ 5.4 (1)]{BPR} for the case that $C$ is contained in a torus, and \cite[\S\ 2.4]{jell} for the extended version.
\end{rem}

\begin{example}\label{exa: trop via valuation}
    Let $K=\BC\{\!\{t\}\!\}$ be the field of \emph{Puiseux series} endowed with the $t$-adic valuation $\nu:K \to \BT$, where $\nu(0)=\infty$. 
    The valuation $\nu$ is naturally extended to $K^n$ by taking coordinatewise valuation, and to the ring of Laurent polynomials $K[x^{\pm},y^{\pm},z^{\pm}]$ by 
    \[
        \nu\Big(\sum a_{\alpha,\beta,\gamma}x^\alpha y^\beta z^\gamma\Big)\coloneqq \min\{\nu(a_{\alpha,\beta,\gamma}) +\alpha x + \beta y + \gamma z\}.
    \]
    
    Let $F\coloneqq t(x^3+y^3+z^3)+xyz\in K[x,y,z]$, and consider $X = V(F)$ the corresponding hypersurafce, which can be thought of as a degenerating family of elliptic curves, with degenerate fiber over $t=0$.  It is naturally embedded in the projective plane $\BP^2_K$. 
    
    Let $\widetilde{X} \coloneqq X \cap \BG_{K}^3$, and passing to the affine chart $\BP_K^2\smallsetminus V(z)$, we consider $\widetilde{X}=V(\widetilde{F})\subseteq \BG_{K}^2$ where $\widetilde{F}=t(r^3 + s^3 + 1) + rs\in K[r^{\pm},s^{\pm}]$.
    The tropicalization $\widetilde{F}_{\trop}$ of $\widetilde{F}$ is given by 
    \[
        \nu(\widetilde{F})=\min\{1 + 3r,\;1 + 3s,\;1,\; r + s\}.
    \]
    Its tropical corner locus $V_{\trop}(\widetilde{F}_{\trop})$ is the set of points in $\R^2$ where $\widetilde{F}_{\trop}$ attains the minimum at least twice. 
    As a special case of the Fundamental Theorem of Tropical Geometry (FTTG), it follows that the tropicalization $\trop(\widetilde{X})$ of $\widetilde{X}$ is given by
    \[
        V_{\trop}(\widetilde{F}_{\trop})=\overline{\nu\big(V(\widetilde{F})\big)} \subset \BR^2;
    \]
    see \cite[Chapter 3]{Maclagan_Sturmfels_2015} for more details concerning tropicalization and the FTTG, and \autoref{fig: tropicalization of the elliptic curve} to visualize the tropical curve $\trop(\widetilde{X})$.
    \begin{figure}[ht]
    \centering
    \begin{tikzpicture}[scale=1, line cap=round, line join=round, every node/.style={font=\normalfont}]
        \def\a{2}
        \draw[->, line width=.8pt] (-\a-.3,0) -- (\a+.5,0) node[below] {$x$};
        \draw[->, line width=.8pt] (0,-\a-.2) -- (0,\a+.5) node[left]  {$y$};

        \coordinate (A) at (1,0);
        \coordinate (B) at (0,1);
        \coordinate (C) at (-1,-1);
        \coordinate (D) at (\a/4,\a/4);

        \draw[red, line width=1.3pt] (A) -- (B) -- (C) -- cycle;
        \draw[red, line width=1.3pt] (A) -- (\a,0);
        \draw[red, line width=1.3pt] (B) -- (0,\a);
        \draw[red, line width=1.3pt] (C) -- (-\a,-\a);

        \fill (A) circle (1.6pt) node[below right] {$(1,0)$};
        \fill (B) circle (1.6pt) node[above right]  {$(0,1)$};
        \fill (C) circle (1.6pt) node[above left]  {$(-1,-1)$};
        \fill (D) circle (1.6pt) node[right]  {$w$};
    \end{tikzpicture}
    \begin{tikzpicture}[scale=1, line cap=round, line join=round, every node/.style={font=\normalfont}]
        \def\a{2}
        \draw[->, line width=.8pt] (-\a-.3,0) -- (\a+.5,0) node[below] {$x$};
        \draw[->, line width=.8pt] (0,-\a-.2) -- (0,\a+.5) node[left]  {$y$};

        \coordinate (A) at (1,0);
        \coordinate (B) at (0,1);
        \coordinate (C) at (-1,-1);
        \coordinate (D) at (\a/4,\a/4);
        \coordinate (E) at (\a,0);
        \coordinate (F) at (0,\a);
        \coordinate (G) at (-\a,-\a);
        \node[] at (-5,0) {};

        \draw[red, line width=1.3pt] (A) -- (B) -- (C) -- cycle;
        \draw[red, line width=1.3pt] (A) -- (\a,0);
        \draw[red, line width=1.3pt] (B) -- (0,\a);
        \draw[red, line width=1.3pt] (C) -- (-\a,-\a);

        \fill (A) circle (1.6pt) node[below right] {$(1,0)$};
        \fill (B) circle (1.6pt) node[above right]  {$(0,1)$};
        \fill (C) circle (1.6pt) node[above left]  {$(-1,-1)$};
        \fill (D) circle (1.6pt) node[right]  {$w$};
        \fill (E) circle (1.6pt) node[right]  {};
        \fill (F) circle (1.6pt) node[right]  {};
        \fill (G) circle (1.6pt) node[right]  {};
    \end{tikzpicture}
    \caption{The tropicalization $\trop(\widetilde{X})$ of the elliptic curve $\widetilde{X}$ on the left, and its compactification on the right.}
    \label{fig: tropicalization of the elliptic curve}
\end{figure}
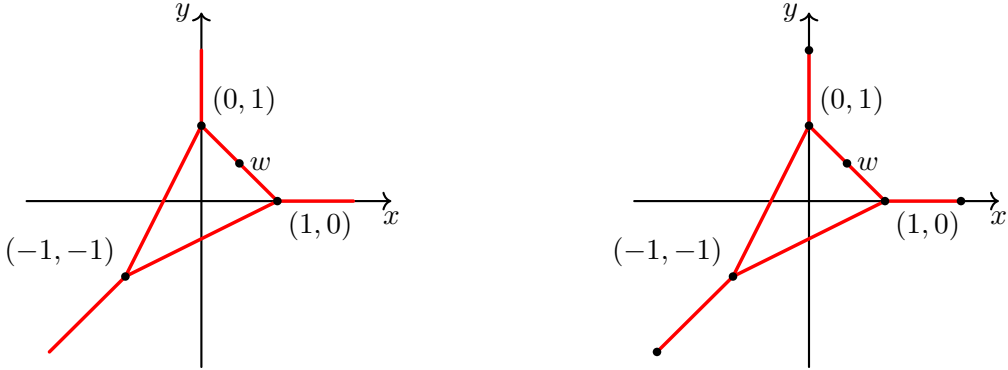

Note that we don't have a well-defined map 
\[
    \trop \colon \Div(X) \longrightarrow \Div\!\big(\!\trop(\widetilde{X})\big), \quad t\sum a_{P}P \longmapsto \sum a_{P}\trop(P),
\]
since $P$ could be a toric boundary point.
For example, consider the divisor $D$ cut out by the line $y + x = 0$, that is, $D= P_{+} + P_{-} + P_{\infty}$, where $P_{+}=[t^{1/2}:-t^{1/2}:1]$, $P_{-}=[-t^{1/2}:t^{1/2}:1]$ and $P_{\infty}=[1:-1:0]$. We have $W \coloneqq \nu(P_{+})=\nu(P_{-})=[1/2:1/2:0]$, and $P \coloneqq \nu(P_\infty)=[0:0:\infty]$. Hence, while $W$ corresponds to the point $w=(1/2,1/2)$ in $\trop(\widetilde{X})$, there is no point in $\trop(\widetilde{X})$ corresponding to $P$. 

It is possible to avoid this inconsistency by considering the extended tropicalization 
\[
    \begin{tikzcd}
        \overline{\trop} \colon X \ar[r] & N_{\Delta}, 
    \end{tikzcd}
\]
where $\Delta$ is the fan of the toric variety $\mathbb{P}^2_K$.
The extended tropicalization $\overline{\trop(X)}$ can be seen in \autoref{fig: tropicalization of the elliptic curve}, on the right. 

Now, note that the point $\nu(P_{\infty})$ corresponds to the endpoint of the ray adjacent to the vertex $(-1,-1)$. We may now consider the degree preserving map 
\[
    \begin{tikzcd}[row sep=0pt,/tikz/column 1/.append style={anchor=base east}
     ,/tikz/column 2/.append style={anchor=base west}]
        \overline{\trop} \colon \Div(X) \arrow[r]  & \Div\!\big(\overline{\trop(X)}\big), \quad \sum a_{P}P \arrow[r,mapsto]  & \sum a_{P}\overline{\trop}(P)
    \end{tikzcd}
\]
and compute volumes of divisors tropically.
\end{example}

\begin{example}
We note that we can turn \autoref{exa: trop via valuation} into the setting of the specialization map by considering an appropriate discrete valued field. Indeed, let $R\coloneqq\BC[\![t^{1/2}]\!]$, $K$ its fraction field, and $\BC$ its residual field. Let $\mathfrak{C}/R$ be the following family of Fermat elliptic curves
    \[
        V\!\big(t(x^3+y^3+z^3)+xyz\big)\subset \BP^2_R.
    \] 
    Note that the special fiber $\mathfrak{C}_\BC$ is isomorphic to $V(xyz) \subset \BP^2_{\BC}$, and the generic fiber $\mathfrak{C}_{K} \subset \BP^2_K$ is smooth, namely it is an elliptic curve. Let $D\in\Div(\mathfrak{C}_{K})$ be the divisor cut out by the line $y + x = 0$, that is, $D= P_{+} + P_{-} + P_{\infty}$, where $P_{+}=[t^{1/2}:-t^{1/2}:1]$, $P_{-}=[-t^{1/2}:t^{1/2}:1]$ and $P_{\infty}=[1:-1:0]$. 
    \begin{figure}[ht]
    \centering
    \begin{tikzpicture}[>=Latex, line cap=round, line join=round,
    x=1cm, y=1cm, scale=.65, line width=1pt, every node/.style={font=\normalsize}]
        \def\W{9.0}
        \def\H{3.5}
        \def\ybase{-1.8}
        \def\xzero{2.0}
        \def\xn{6.6}

        \def\kappa{0.75}

        \def\fiberZeroCurv{0.25}
        \def\fiberZeroV{0.38}
        \def\fiberZeroBottom{0.10}
        \def\fiberZeroHeight{2.90}

        \def\fiberTBottom{0.55}
        \def\fiberTHeight{2.20}

        \coordinate (TL) at (0,\H);
        \coordinate (TR) at (\W,\H);
        \coordinate (BR) at (\W,0);
        \coordinate (BL) at (0,0);

        \coordinate (cR1) at ({\kappa*0.55},{\kappa*(-0.9)});
        \coordinate (cR2) at ({\kappa*0.55},{\kappa*( 0.9)});
        \coordinate (cB1) at ({\kappa*(-2.2)},{\kappa*(-0.35)});
        \coordinate (cB2) at ({\kappa*( 2.2)},{\kappa*(-0.35)});

        \draw (TR) .. controls ($(TR)+(cB1)$) and ($(TL)+(cB2)$) .. (TL)
              .. controls ($(TL)+(cR1)$) and ($(BL)+(cR2)$) .. (BL)
              .. controls ($(BL)+(cB2)$) and ($(BR)+(cB1)$) .. (BR)
              .. controls ($(BR)+(cR2)$) and ($(TR)+(cR1)$) .. (TR) -- cycle;

        \node at (4.5,\H + .4)
        {$V\!\big(t(x^3+y^3+z^3)+xyz\big)\subset \BP^2_R$};

        \coordinate (Ztop) at (\xzero, {\fiberZeroBottom + \fiberZeroHeight});
        \coordinate (Zmid) at (\xzero, {\fiberZeroBottom + 0.5*\fiberZeroHeight});
        \coordinate (Zbot) at (\xzero, {\fiberZeroBottom});

        \draw (Ztop) .. controls ($(Ztop)+(-\fiberZeroCurv,-\fiberZeroV)$)
                      and ($(Zmid)+(-\fiberZeroCurv, \fiberZeroV)$) .. (Zmid)
                      .. controls ($(Zmid)+(\fiberZeroCurv,-\fiberZeroV)$)
                      and ($(Zbot)+(\fiberZeroCurv, \fiberZeroV)$) .. (Zbot);

        \node at (\xzero - .7, {\fiberZeroBottom + 1.4}) {$\mathfrak{C}_{K}$};

        \coordinate (Tcenter) at (\xn, {\fiberTBottom + 0.55*\fiberTHeight - .5});
        \coordinate (Ta) at ($(Tcenter)+(0,1.10)$);
        \coordinate (Tb) at ($(Tcenter)+(-1.05,-0.75)$);
        \coordinate (Tc) at ($(Tcenter)+( 1.05,-0.75)$);

        \draw ($(Ta) + (.2,.3)$) -- ($(Tb) +(-.2,-.3)$);
        \draw ($(Tb) + (-.3,0)$) -- ($(Tc) + (.3,0)$);
        \draw ($(Tc) + (.2,-.3)$) -- ($(Ta) + (-.2,.2)$);

        \fill ($(Ta) + (-.03,-.1)$) circle (3pt);
        \fill (Tb) circle (3pt);
        \fill (Tc) circle (3pt);

        \node at ($(Tcenter)+(1,.3)$) {$\mathfrak{C}_{k}$};

        \draw (-0.1,\ybase) -- (\W+.3,\ybase);

        \fill (\xzero,\ybase) circle (3pt);
        \fill (\xn,\ybase)    circle (3pt);

        \node[below=4pt] at (\xzero,\ybase) {$\langle 0\rangle$};
        \node[below=4pt] at (\xn,\ybase) {$\langle t^{1/2}\rangle$};

        \draw[dashed] (\xzero,{\fiberZeroBottom - .1}) -- (\xzero,\ybase);
        \draw[dashed] (\xn,{\fiberTBottom - .5}) -- (\xn,\ybase);

        \def\xSpec{-1.5}

        \coordinate (SpecPoint) at (\xSpec,\ybase);
        \node at (SpecPoint) {$\mathrm{Spec}\,R$};

        \node at (\xSpec,2.8) {$\mathbb{P}^{2}_{R}$};

        \draw[->] (\xSpec,2.2) -- (\xSpec,\ybase+0.5);
        \node[left] at (\xSpec,.7) {$\pi$};
    \end{tikzpicture}
    \begin{tikzpicture}[scale=1, line width=1pt, every node/.style={font=\normalsize}]
            \def\r{.8}
            \foreach \i/\ang in {1/90,2/210,3/330}{
            \coordinate (V\i) at (\ang:\r);
            \fill (V\i) circle (2pt);
            }
            \node[] at ($(V1) + (90:.3)$) {$v_{x}$};
            \node[] at ($(V2) + (180:.5)$) {$v_{y}$};
            \node[] at ($(V3) + (0:.5)$) {$v_{z}$};
            \fill (30:\r/2) circle (2pt);
            \node[right] at (30:\r/2) {$w$};
            \node[right] at (260:1.5) {$\Gamma$};
            \node[right] at (270:2.5) {};
            \node[right] at (180:4) {};

            \foreach \i/\j in {1/2,2/3,3/1}{
            \draw (V\i) -- (V\j);
            }
        \end{tikzpicture}
        \caption{The family $\mathfrak C/R$ with smooth generic fiber $\mathfrak C_K$, and special fiber
    $\mathfrak C_k\cong V(xyz)$, a union of three $\mathbb P_k^1$'s meeting pairwise (on the left), and the dual multigraph $\Gamma$ of the special fiber $\mathfrak{C}_{k}$ (on the right).}
    \label{fig: degeneration_triangle}
    \end{figure}
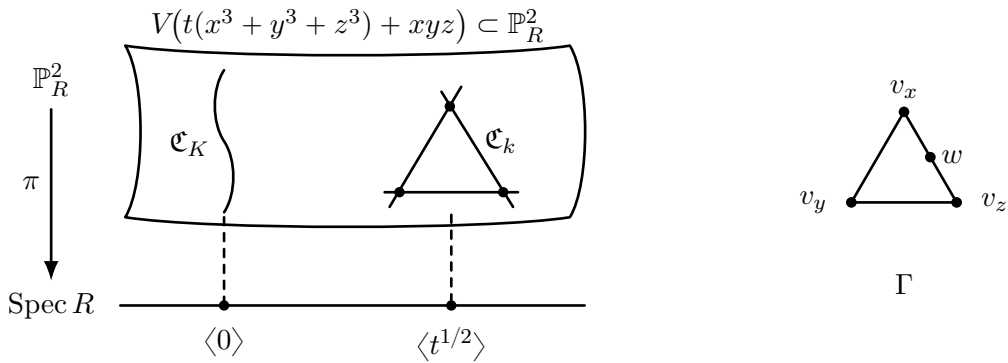

    Let $\rho:\Div(\mathfrak{C}_{K}) \to \Div(\Gamma)$ be the specialization map.  Note that $\rho(D_{K})=2w + v_z$; see \autoref{fig: degeneration_triangle}. Once more, as in \autoref{exa: vol(D)=vol(p(D))}, we can directly verify that $\deg(D_{K})=\vol_{\Gamma}^{\BT}\!\big(\rho(D_{K})\big)=3$.

\end{example}

\medskip

\bibliographystyle{amsalpha}
\bibliography{ref} 

\end{document}